\documentclass[12pt]{article}
\usepackage[left=2cm, right=2cm, top=2cm, bottom=2cm]{geometry}

\usepackage[all]{xy}
\usepackage[affil-it]{authblk}
\usepackage{amsmath} 
\usepackage{amssymb} 
\usepackage{makeidx} 
\usepackage{graphicx} 
\usepackage{enumerate}
\usepackage[utf8]{inputenc}
\usepackage{amsthm}
\usepackage{color}
\usepackage{cancel}
\usepackage{multirow}

\theoremstyle{plain}
\newtheorem{theorem}{Theorem}[section]
\newtheorem{lemma}[theorem]{Lemma}

\theoremstyle{definition}

\newtheorem{remark}[theorem]{Remark}

\newtheorem{definition}[theorem]{Definition}

\numberwithin{equation}{section}
\newtheorem{example}[theorem]{Example}




\long\def\symbolfootnote[#1]#2{\begingroup%
\def\thefootnote{\fnsymbol{footnote}}\footnote[#1]{#2}\endgroup}

\def\00{{\bf 0}}
\def\11{{\bf 1}}
\def\+{\oplus}


\newcommand{\boxtensor}{{\Box\kern-9.03pt\raise1.42pt\hbox{$\times$}}}



\newcommand{\be}{\begin{eqnarray}}
\newcommand{\ee}{\end{eqnarray}}

\bibliographystyle{elsarticle-num}

\usepackage{color}
\def\meidl#1 {\fbox {\footnote {\ }}\ \footnotetext { From Wilfried: {\color{red}#1}}}
\def\hmeidl#1 {}

\begin{document}

\title{On nilpotent automorphism groups of function fields}

\author{Nurdag\"{u}l Anbar  and  Bur\c{c}in G\"{u}ne\c{s}
\vspace{0.4cm} \\
\small{Sabanc{\i} University}\\
\small MDBF, Orhanl\i, Tuzla, 34956 \. Istanbul, Turkey\\
\small Email: {\tt nurdagulanbar2@gmail.com}\\
\small Email: {\tt bgunes@sabanciuniv.edu }
 }

\date{}

\maketitle

\begin{abstract}
We study the automorphisms of a function field of genus $g\geq 2$ over an algebraically closed field of characteristic $p>0$. More precisely, we show that the order of a nilpotent subgroup $G$ of its automorphism group is bounded by $16 (g-1)$ when G is not a $p$-group. We show that if $|G|=16(g-1) $, then $g-1$ is a power of $2$. Furthermore, we provide an infinite family of function fields attaining the bound.
\end{abstract}

\noindent \textbf{Keywords:} Function field, Hurwitz Genus Formula, nilpotent group, positive characteristic \\  
\textbf {Mathematics Subject Classification(2010):} 14H05, 14H37

\section{Introduction}\label{introduction}


Let $ K $ be an algebraically closed field and $F$ be a function field of genus $g$ with constant field~$K$. 
Denote by $\mathrm{Aut}(F/K) $ the automorphism group  of $ F $ over $ K $.
It is a well-known fact that if $ F$ is of genus $0$ or $1$, then $\mathrm{Aut}(F/K)$ is an infinite group.
However, this group is finite if $g \geq 2$, which is proved by Hurwitz \cite{Hurwitz1893} for $K=\mathbb{C}$ and 
by Schmid \cite{schmid1938} for $K$ of positive characteristic. 
In his paper, Hurwitz also showed that $|\mathrm{Aut}(F/K)| \leq 84(g-1)$, which is now called the Hurwitz bound.
This bound is sharp, i.e., there exist function fields of characteristic zero of arbitrarily high genus whose automorphism group has order $ 84(g - 1) $, see \cite{Macbeath}. Roquette \cite{roquette1970} showed that the Hurwitz bound also holds in positive characteristic $p$, if $p$ does not divide $|\mathrm{Aut}(F/K) |$. 
We remark that the Hurwitz bound does not hold in general. 
In positive characteristic, the best known bound  is  
\begin{center}
	$ |\mathrm{Aut}(F/K) | \leq 16g^4 $
\end{center}
with one exception: the Hermitian function field. 
This result is due to Stichtenoth \cite{Sti1, Sti2}.

\noindent However, there are better bounds for the order of certain subgroups of automorphism groups. Let $ G \leq \mathrm{Aut}(F/K)$.
Nakajima \cite{Nakajima} showed that if $G$ is abelian, then $ |G| \leq 4(g+1)$ for any characteristic.
Furthermore, Zomorrodian \cite{Zom} proved that 
$$
|G| \leq 16(g-1),
$$ 
when $K=\mathbb{C}$ and $G$ is a nilpotent subgroup.
He also showed that if the equality holds, then $g-1$ is a power of $2$.
Conversely, if $g-1$ is a power of $2$, then there exists a function field of genus $g $ that admits an automorphism group of order $ 16(g-1) $; whence a nilpotent group of order power of $ 2 $. We remark that his approach is based on the method of Fuchsian groups.

In this paper, we give a similar bound for the order of any nilpotent subgroup of the automorphism group of a function field in positive characteristic except for one case. 
More precisely, our main result is as follows.
\begin{theorem}\label{thm:main}
	Let $K$ be an algebraically closed field of characteristic $p>0$ and $F/K$ be a function field of genus $g\geq 2$. 
	Suppose that $G \leq  \mathrm{Aut}(F/K)$ is a nilpotent subgroup of order $$|G|>16(g-1). $$ 
	Then the following holds.
	\begin{itemize}
		\item[(i)] $ G $ is a $p$-group.
		\item[(ii)] The fixed field $ F_0 $ of $ G $ is rational. 
		\item[(iii)] There exists a unique place $ P_0 $ of $ F_0 $, which is ramified in $ F/F_0 $. Moreover, $ P_0 $ is totally ramified, and 
		$$ 
		|G| \leq \frac{4p}{(p-1)^2}g^2.
		$$
	\end{itemize}	
\end{theorem}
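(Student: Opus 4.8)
\emph{Plan of proof.} The tool throughout is the Hurwitz genus formula applied to $F/F_0$ and to intermediate fields, combined with the structure theory of finite nilpotent groups. Write $p$ for the characteristic and decompose $G = Q \times S$, where $Q$ is the (normal) Sylow $p$-subgroup and $S$ is the Hall $p'$-subgroup, again nilpotent; put $E := F^{Q}$, so that $E/F_0$ is a \emph{tame} Galois extension with group $S$ and $F/E$ is a Galois $p$-extension with group $Q$. Let $g_0,g_1$ be the genera of $F_0,E$. Transitivity of the different in $F \supseteq E \supseteq F_0$ gives
\[
\frac{2g-2}{|G|} \;=\; (2g_0-2)\;+\;\sum_{P_0}\Bigl(1-\tfrac{1}{e(P_0)}\Bigr)\;+\;\frac{\delta}{|G|},
\]
the sum running over places $P_0$ of $F_0$ ramified in $E/F_0$, with index $e(P_0)\ge 2$, and $\delta:=\deg\operatorname{Diff}(F/E)\ge 0$. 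The structural remark I will lean on is that $S$ and $Q$ \emph{commute}: hence the automorphisms from $S$ act trivially by conjugation on every ramification subgroup of $F/E$, so the Artin--Schreier(--Witt) data defining a wildly ramified place of $F/E$ over an $S$-fixed place of $E$ must be invariant under the (faithful) local tame action of $S$ there, which forces its conductor to be divisible by the order of that local character.

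For (i) assume $S\neq 1$ and prove $|G|\le 16(g-1)$. If $g_0\ge 2$ the first term already gives $|G|\le g-1$; if $g_0=1$ then something must ramify (else $g\le 1$), and either $E/F_0$ is ramified, so $\sum(1-1/e(P_0))\ge\tfrac12$ and $|G|\le 4(g-1)$, or $E/F_0$ is unramified, in which case $S$ acts freely on the genus-one field $E$, the branch locus of $F/E$ is a union of $S$-orbits of size $|S|$, and the wild different contributes $\delta\ge|G|$. The main case is $g_0=0$, where the identity reads $\tfrac{2g-2}{|G|}=\sum(1-1/e(P_0))-2+\delta/|G|$. Writing $e_1,\dots,e_r$ for the indices, the inertia generators $\gamma_1,\dots,\gamma_r$ of $E/F_0$ generate $S$ and satisfy $\gamma_1\cdots\gamma_r=1$; projecting onto each Sylow $\ell$-subgroup of $S$ and using that an $\ell$-element has $\ell$-power order (Zomorrodian's observation) shows that for $S$ nilpotent the only branch data with $\sum(1-1/e_i)-2<\tfrac18$ form the short list: $r=4$ with $(2,2,2,2)$; $r=3$ with $(2,2,2^{k-1})$, $(2,3,6)$, $(2,4,4)$, or $(3,3,3)$; and $r=2$ with $S$ cyclic of order $e_1=e_2$. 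For each of these $E$ has genus $0$ or $1$, $S$ acts on $E$ with non-free locus contained in the few ramified places of $E/F_0$, and since $F/E$ is ramified (as $g\ge 2$) with $S$-stable branch locus, that locus either meets a free $S$-orbit — giving at least $|S|$ ramified places and $\delta\ge|G|$ — or is supported on $S$-fixed places, where the commuting remark forces local conductors comparable to $|S|$, again $\delta\gtrsim|G|$; in every case $|G|\le 16(g-1)$ (in fact $\lesssim 4(g-1)$). I expect this enumeration of the exceptional tame data together with the conductor bookkeeping to be the main technical obstacle.

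By (i), $|G|>16(g-1)$ forces $G$ to be a $p$-group, so $E=F$ and all ramification of $F/F_0$ is wild; a place with inertia of order $p^{t}$ has different exponent $\ge 2(p^{t}-1)$, hence contributes at least $\tfrac{|G|}{p^{t}}\,2(p^{t}-1)\ge 2|G|(1-\tfrac1p)=\tfrac{2(p-1)}{p}|G|$ to $\deg\operatorname{Diff}(F/F_0)$. If $g_0\ge 2$ then $2g-2\ge 2|G|$, and if $g_0=1$ then $2g-2=\deg\operatorname{Diff}(F/F_0)\ge|G|$; both contradict $|G|>16(g-1)$, so $g_0=0$ and $F_0$ is rational, giving (ii). Then $\deg\operatorname{Diff}(F/F_0)=2g-2+2|G|$ with $2g-2<|G|/8$, so the number $r$ of ramified places satisfies $r\cdot\tfrac{2(p-1)}{p}|G|<\tfrac{17}{8}|G|$, i.e. $r<\tfrac{17p}{16(p-1)}$; thus $r=1$ for $p\ge 3$, while for $p=2$ a short computation with the different (using that $F/F_0$ cannot be totally ramified at two places, else $|G|\le g+1$) rules out $r=2$ as well. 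Hence there is a unique ramified place $P_0$, and it is totally ramified: otherwise the inertia at a place above it is a proper subgroup, hence (as $G$ is a $p$-group) lies in a maximal, so normal index-$p$, subgroup $M$, and $F^{M}/F_0$ would be an unramified cover of $\mathbb{P}^1$ — impossible.

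Finally, for the quadratic bound, write $F_0=K(x)$, branched only at the totally ramified $P_0$, and let $G=G_0=G_1\supseteq G_2\supseteq\cdots$ be the lower-numbering filtration at the place above $P_0$. The genus formula gives $2g-2=-2|G|+\bigl(2(|G|-1)+\sum_{i\ge 2}(|G_i|-1)\bigr)$, i.e. $2g=\sum_{i\ge 2}(|G_i|-1)$ when the first jump is at $i=1$ (if it occurs later one gets the stronger bound $|G|\le 2g+1$ at once). In particular $|G_2|\le 2g+1$, while $F^{G_2}$ is rational and $G/G_2$ is an elementary abelian $p$-group acting on it with a single totally ramified point; the standard estimates for such big actions (Nakajima; Lehr--Matignon) bound $|G/G_2|$ in terms of $g$ as well, and multiplying the two bounds yields $|G|\le\tfrac{4p}{(p-1)^2}g^2$.
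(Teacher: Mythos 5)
Your strategy is genuinely different from the paper's: you split $G=Q\times S$ into its Sylow $p$-part and Hall $p'$-part, pass to the tame quotient $E=F^{Q}$, enumerate the exceptional tame branch data through inertia generators with product one (this needs the specialization theorem for the tame fundamental group, whereas the paper obtains the corresponding restrictions elementarily from the existence of normal subgroups of every order, Lemmas~\ref{lemma1}--\ref{lem:two}), and then estimate the wild different via the fact that $S$ centralizes the wild inertia, so all ramification jumps are divisible by the tame order --- in substance the paper's Lemma~\ref{1wild}. Your arguments for (ii) and for the uniqueness and total ramification in (iii) agree with the paper's. However, the closing quadratic bound cannot be obtained by ``multiplying'' $|G_2|\le 2g+1$ with a bound on $|G/G_2|$: an elementary abelian $p$-group acting on a rational field with a single totally ramified place admits no intrinsic bound (think of translations $x\mapsto x+a$), and the estimate you actually need is Stichtenoth's $|G_1|\le \frac{4|G_2|}{(|G_2|-1)^2}\,g^2$, which the paper simply cites.

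The genuine gap is in part (i), which is the bulk of the theorem and which you yourself defer as ``the main technical obstacle'': the conductor bookkeeping in the exceptional tame cases is not carried out, and as described it does not suffice. Concretely, take $S$ cyclic of order $2$, $p=3$, $E$ rational, and $F/E$ branched at exactly one of the two $S$-fixed places with inertia of order $3$, so that $F/F_0$ has type $(2,6)$: the jump-divisibility estimate gives exactly $2g-2\ge 0$, hence no inequality of the form $|G|\le c(g-1)$. Closing such boundary configurations requires precisely the structural facts the paper proves --- a unique wildly ramified place forces the full Sylow $p$-subgroup into inertia (Lemma~\ref{1wild}), and exactly two $\ell$-ramified places force the full Sylow $\ell$-subgroup (Lemma~\ref{lem:two}) --- which pin $|G|$ down to a small explicit value (here $6$) that lies below $16(g-1)$ only because $g\ge 2$; compare the treatment of type $(N_1,N_1p^{b})$ in Theorem~\ref{thm:r=2}. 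Moreover, your parenthetical claim that every exceptional case gives $|G|\lesssim 4(g-1)$ is false: the genus-$2$ fields of type $(5,10)$ for $p=2$ and $(2,10)$ for $p=5$ lie in your $r=2$ cyclic exceptional class, have $S\neq 1$, and attain $|G|=10(g-1)$ (Theorem~\ref{thm:r=2} and Remark~\ref{rem:2}). So the plan is workable in principle, but the case analysis that constitutes the actual content of Section~3 is missing, and where your sketch makes checkable quantitative claims they are too optimistic.
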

%

\begin{remark}
In the exceptional case in Theorem~\ref{thm:main}, since there is a unique ramified place $ F $ has $p$-rank zero by \cite[Corollary~2.2]{AST}.
\end{remark}
\begin{remark}
We conclude from the proof of Theorem~\ref{thm:main} that the bound $ |G| \leq 16(g-1) $ also holds when $ p=0 $. Moreover, when the bound is attained the order of $ G $ is a power of $ 2 $. 
\end{remark}

\section{Preliminary Results}\label{pre}

In this section, we recall some basic notions related to function fields and give some preliminary results, which will be our main tools for the proof of Theorem~\ref{thm:main}. For more details about function fields, we refer to \cite{HKT, thebook}.

 Let $K$ be an algebraically closed field of characteristic $ p $ and $F\supseteq E$ be a finite separable extension of function fields over $K$ of genus $g(F)$ and $g(E)$, respectively. Denote by $\mathbb{P}_F$ the set of places of $F$. For a place $Q\in \mathbb{P}_F$ lying above $P\in \mathbb{P}_E$, we write $Q|P$, and denote by $e(Q|P)$ the ramification index and by $d(Q|P)$ the different exponent of $Q|P$.  Recall that $Q|P$ is ramified if $e(Q|P)>1$.
Moreover, if $p$ does not divide $e(Q|P)$, then it is called tamely ramified; otherwise it is called wildly ramified. By Dedekind's Different Theorem \cite[Theorem 3.5.1]{thebook}, $Q|P$ is ramified if and only if $d(Q|P)>0$. 
More precisely, $d(Q|P)\geq e(Q|P)-1$ and the equality holds if and only if $Q|P$ is tame. Note that every place of $F$ has degree $1$ as $K$ is algebraically closed; hence, the genera of $F$ and $E$ are related by the Hurwitz Genus Formula \cite[Theorem 3.4.13]{thebook} as follows.
\begin{equation}\label{eq:hgf1}
2g(F)-2 = [F:E](2g(E)-2) + \sum_{\substack{Q\in \mathbb{P}_F, P\in \mathbb{P}_E\\
Q|P}} d(Q|P),
\end{equation}
where $[F:E]$ is the extension degree of $F/E$. 

\noindent From now on, we suppose that $F/E$ is a Galois extension with Galois group $G$. As $F/E$ is Galois, $[F:E]=|G|$. Let $Q_1, \ldots, Q_m \in \mathbb{P}_F$ be all extensions of $P\in \mathbb{P}_E$. Since $G$ acts transitively on $Q_1, \ldots, Q_m$,  we have
$$ e(Q_i|P)=e(Q_j|P)=:e(P) \quad \text{and} \quad d(Q_i|P)=d(Q_j|P)=:d(P) $$
for all $i,j\in \lbrace 1,\ldots,m \rbrace$. Then by the Fundamental Equality \cite[Theorem 3.1.11]{thebook}, 
Equation~\eqref{eq:hgf1} can be written as 
\begin{align}\label{eq:hgf}
2g(F)-2=|G|\left( 2g(E)-2+ \sum_{ P\in \mathbb{P}_E }\frac{d(P)}{e(P)}\right).
\end{align}
Equation~\eqref{eq:hgf} and the following well-known lemma will be our main tools to give an upper bound for the order of a nilpotent subgroup of the automorphism group of a function field. 
\begin{lemma}
If 	$ G $ is a finite nilpotent group, then $G$ has a normal subgroup of order $n$ for each divisor $n$ of $ |G| $.
\end{lemma}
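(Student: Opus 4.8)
The plan is to reduce the statement to the case of $p$-groups via the structure theory of finite nilpotent groups, and then settle the $p$-group case by induction on the order.

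First I would invoke the standard characterization: a finite nilpotent group $G$ is the internal direct product $G = P_1 \times \cdots \times P_k$ of its Sylow subgroups, where $P_i$ is the unique (hence normal) Sylow $p_i$-subgroup for the distinct primes $p_1, \dots, p_k$ dividing $|G|$. Granting this, write a divisor $n$ of $|G|$ as $n = p_1^{a_1} \cdots p_k^{a_k}$ with $0 \le a_i \le v_i$, where $|P_i| = p_i^{v_i}$. If each $P_i$ has a normal subgroup $N_i$ of order $p_i^{a_i}$, then $N_1 \times \cdots \times N_k$ is a normal subgroup of $G$ of order $n$, since a product of normal subgroups of the factors is normal in a direct product. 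Thus it suffices to prove the claim for a finite $p$-group.

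Next I would prove that a group $P$ of order $p^v$ has a normal subgroup of order $p^a$ for every $0 \le a \le v$, by induction on $v$. The cases $a = 0$ and $a = v$ are trivial, so suppose $0 < a < v$. Since $P$ is a nontrivial $p$-group, its center $Z(P)$ is nontrivial (class equation), so by Cauchy's theorem $Z(P)$ contains a subgroup $C$ of order $p$; being central, $C$ is normal in $P$. The quotient $P/C$ has order $p^{v-1}$, so by the induction hypothesis it has a normal subgroup $\overline{H}$ of order $p^{a-1}$. Its preimage $H$ under the canonical projection $P \to P/C$ is then a normal subgroup of $P$ of order $p \cdot p^{a-1} = p^a$, which completes the induction.

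There is no serious obstacle here: the only substantive inputs are the decomposition of a finite nilpotent group as the direct product of its Sylow subgroups and the fact that a nontrivial finite $p$-group has nontrivial center. The step that deserves a line of care is checking that $N_1 \times \cdots \times N_k$ is genuinely normal in $G$ and has the prescribed order, and that the preimage of a normal subgroup under a quotient map is normal — both routine.
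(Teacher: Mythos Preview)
Your proposal is correct and follows essentially the same approach as the paper: the paper's proof consists of a single sentence invoking the decomposition of a finite nilpotent group as the direct product of its Sylow subgroups (with a reference to Hungerford), and you have simply fleshed out the details, including the standard inductive argument for the $p$-group case via the nontrivial center.
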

\begin{proof}
The proof follows from the fact that every finite nilpotent group is a direct product of its Sylow subgroups, see \cite{Hungerford}.
\end{proof}

Let $G$ be a subgroup of $\mathrm{Aut}(F/K)$. 
We denote the fixed field $F^G$ of $G$ by
$F_0$ and the genus of $F_0$ by $g_0$. 
Note that $F/F_0$ is a Galois extension with Galois group $ G$. Set $N:= |G|$.

\begin{lemma}\label{l1}
	If $g_0 \geq 1$, then $N \leq 4(g-1)$. 
\end{lemma}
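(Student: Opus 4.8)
The plan is to apply the Hurwitz Genus Formula in the form of Equation~\eqref{eq:hgf} to the extension $F/F_0$ and exploit the hypothesis $g_0 \geq 1$. Writing the genus relation as
$$
2g-2 = N\left(2g_0 - 2 + \sum_{P \in \mathbb{P}_{F_0}} \frac{d(P)}{e(P)}\right),
$$
I would first observe that since $d(P) \geq e(P) - 1$ for every ramified place (with equality exactly in the tame case, by Dedekind's Different Theorem), each summand satisfies $d(P)/e(P) \geq 1 - 1/e(P) \geq 1/2$ whenever $P$ is ramified. Hence the whole parenthesized quantity is bounded below by $2g_0 - 2$ plus one half times the number of ramified places.

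The argument then splits according to how large $g_0$ is and whether any ramification occurs. If $g_0 \geq 2$, then already $2g_0 - 2 \geq 2$, so $2g-2 \geq 2N$, giving $N \leq g-1$, which is far stronger than needed. If $g_0 = 1$, the term $2g_0 - 2$ vanishes, so I need ramification to make the parenthesis positive: if at least one place $P$ is ramified in $F/F_0$ then the parenthesis is at least $1/2$, whence $2g - 2 \geq N/2$, i.e. $N \leq 4(g-1)$. The only remaining subcase is $g_0 = 1$ with $F/F_0$ unramified everywhere; but then Equation~\eqref{eq:hgf} forces $2g - 2 = 0$, contradicting $g \geq 2$ (this case simply does not occur). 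Collecting the cases yields $N \leq 4(g-1)$ in all situations with $g_0 \geq 1$.

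The main point to be careful about is the lower bound $d(P)/e(P) \geq 1/2$: this uses $d(P) \geq e(P)-1$ together with the elementary inequality $(e-1)/e \geq 1/2$ for integers $e \geq 2$, and it does not require knowing whether the ramification is tame or wild — the different exponent is only larger in the wild case, so the inequality is safe. I expect no real obstacle here; the only thing to verify is that the $g_0 = 1$, unramified case is correctly handled by noting it is vacuous, and that one does not need the nilpotency of $G$ at all for this particular lemma (nilpotency will enter later, when one wants to descend through a chain of subgroups).
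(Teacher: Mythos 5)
Your proposal is correct and follows essentially the same argument as the paper: split on $g_0 \geq 2$ versus $g_0 = 1$, and in the latter case use $g \geq 2$ to force a ramified place with $d(P)/e(P) \geq 1/2$, giving $N \leq 4(g-1)$. Your remark that nilpotency is not needed here is also consistent with the paper, which only invokes nilpotency in the later lemmas.
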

\begin{proof}
	If $g_0 \geq 2$, then by Equation~\eqref{eq:hgf}, we conclude that $2g-2 \geq 2 N$, i.e., $ N \leq (g-1) $. 
	If $ g_0=1$, then 
	$$ 
	\displaystyle 2g-2= N\left( \sum_{\substack{P\in \mathbb{P}_{F_0}}}\frac{d(P)}{e(P)}\right),
	$$ 
	by Equation~\eqref{eq:hgf}. Since $g \geq 2 $, there exists a ramified place $P_0 \in \mathbb{P}_{F_0}$. 
	Hence,
	$$ 
	2g-2 \geq N  \frac{d(P_0)}{e(P_0)} \geq N  \frac{(e(P_0)-1)}{e(P_0)} .
	$$
	This implies that $ N \leq 4(g-1) $ as $e(P_0) \geq 2$.
\end{proof}
\noindent From now on, we assume that $G$ is a nilpotent subgroup of $\mathrm{Aut}(F/K)$. By Lemma~\ref{l1}, we also assume  that $g(F_0)=0$. 

\begin{definition}
	Suppose that  $P_1, \ldots, P_r$ are all places of ${F_0}$, which are ramified in $F$ with ramification indices $e_1, \ldots, e_r $ and different exponents $d_1, \ldots, d_r $, respectively. We can without loss of generality assume that $e_1 \leq \ldots \leq e_r $. In this case, we say that $F/F_0$ (or shortly $F$) is of type $ (e_1,e_2, \ldots, e_r) $.
\end{definition}

\begin{lemma} \label{lemma1}
	Let $\ell$ be a prime number. Then $ \ell|N $ if and only if  $\ell|e_i$ for some $i~\in~\{1, \ldots, r \} $.
\end{lemma}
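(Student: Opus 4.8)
The plan is to treat the two implications separately; only the forward direction $\ell\mid N\Rightarrow\ell\mid e_i$ for some $i$ requires work. The reverse implication is immediate: since $F/F_0$ is Galois, for any place $Q$ of $F$ lying over $P_i$ the ramification index $e(Q|P_i)=e_i$ divides $[F:F_0]=N$, so $\ell\mid e_i$ forces $\ell\mid N$.

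For the converse I would argue by contradiction. Assume $\ell\mid N$ but $\ell\nmid e_i$ for every $i\in\{1,\dots,r\}$. Since $G$ is nilpotent it has a normal subgroup of every order dividing $N$; taking one of order $N/\ell$ gives $H\trianglelefteq G$ with $[G:H]=\ell$. Put $E:=F^H$, so that $F\supseteq E\supseteq F_0$ and $E/F_0$ is Galois of degree $\ell$.

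The crucial claim is that no place of $F_0$ ramifies in $E/F_0$. A place of $F_0$ that is unramified in $F/F_0$ is a fortiori unramified in the subextension $E/F_0$, so only $P_1,\dots,P_r$ can ramify in $E/F_0$. For such a $P_i$, pick a place $Q$ of $F$ lying over a place $Q'$ of $E$ lying over $P_i$; multiplicativity of ramification indices in the tower gives $e(Q|P_i)=e(Q|Q')\,e(Q'|P_i)$, hence $e(Q'|P_i)\mid e_i$. On the other hand $E/F_0$ is Galois of prime degree $\ell$, so the inertia group at $P_i$ is either trivial or the whole Galois group, whence $e(Q'|P_i)\in\{1,\ell\}$. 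Since $\gcd(e_i,\ell)=1$, this forces $e(Q'|P_i)=1$, proving the claim.

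Finally, apply the Hurwitz Genus Formula~\eqref{eq:hgf} to $E/F_0$: using $g(F_0)=0$ (our standing assumption) and the absence of ramification, it reads $2g(E)-2=\ell(2\cdot 0-2)=-2\ell$, i.e.\ $g(E)=1-\ell<0$, which is absurd. This contradiction shows $\ell\mid e_i$ for some $i$. I expect no serious obstacle here: the only mildly delicate point is the reduction to the places $P_1,\dots,P_r$ and the coprimality step, both of which are routine once one uses that $E/F_0$ is Galois of prime degree.
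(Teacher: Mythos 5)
Your proposal is correct and follows essentially the same route as the paper: the reverse direction from $e_i\mid N$, and the forward direction by taking a normal subgroup $H$ of index $\ell$ (using nilpotency), observing that $F^H/F_0$ is an unramified Galois extension of degree $\ell$ of a rational function field, and deriving the contradiction $g(F^H)=1-\ell<0$ from the Hurwitz Genus Formula. The only difference is that you spell out the verification that $F^H/F_0$ is unramified (via multiplicativity of ramification indices and the prime-degree coprimality argument), which the paper leaves implicit.
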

\begin{proof} 
	If $ \ell|e_i $ for some $i \in \{1, \ldots, r \} $, then $ \ell|N $ since $ e_i |N $.
	Suppose that $ \ell |N $ and $\ell \nmid e_i$ for any $i = 1, \ldots, r$.
	Since $G$ is nilpotent, there is a normal subgroup $H $ of $ G$ such that $ [G:H]=\ell $. Set $ F_1:=F^H $. 
	Note that $ F_1 / F_0 $ is an unramified Galois extension of degree $\ell$. 
	Then by Equation~\eqref{eq:hgf} and the assumption $g(F_0)=0$, we obtain that $2g(F_1) - 2 = -2\ell$. That is, $g(F_1) = -\ell+1<0$, which is a contradiction. 
\end{proof}

\begin{lemma} \label{lemma2}
If $\ell$ is a prime number, which divides exactly one of $e_1, \ldots, e_r$, then $\ell=p$.
\end{lemma}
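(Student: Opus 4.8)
The plan is to argue by contradiction: assume that $\ell$ divides exactly one of $e_1,\dots,e_r$ — say $\ell\mid e_j$ and $\ell\nmid e_i$ for $i\neq j$ — but that $\ell\neq p$. Since $e_j\mid N$ we have $\ell\mid N$, so, $G$ being finite nilpotent, it contains a normal subgroup $H$ of index $\ell$; put $F_1:=F^H$, so that $F_1/F_0$ is a (cyclic) Galois extension of degree $\ell$. The first step is to pin down the ramification in $F_1/F_0$. For $i\neq j$, fix a place $Q$ of $F$ over $P_i$: its inertia group in $G$ has order $e(P_i)=e_i$, and the ramification index of $P_i$ in $F_1/F_0$ is the order of the image of this inertia group in $G/H\cong\mathbb Z/\ell$ (the residue extensions being trivial since $K$ is algebraically closed). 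That order divides $\gcd(e_i,\ell)=1$, hence is $1$; so $P_i$ is unramified in $F_1/F_0$ for every $i\neq j$. Consequently $P_j$ is the only place of $F_0$ that can ramify in $F_1/F_0$.

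The second step is to feed this into the Hurwitz Genus Formula \eqref{eq:hgf} applied to $F_1/F_0$, remembering that $g(F_0)=0$. If it turns out that $P_j$ is also unramified in $F_1/F_0$, then $F_1/F_0$ is unramified everywhere and \eqref{eq:hgf} gives $2g(F_1)-2=-2\ell$, i.e. $g(F_1)=1-\ell<0$ — impossible. Otherwise $P_j$ ramifies; as $[F_1:F_0]=\ell$ is prime it must be totally ramified, and because $\ell\neq p$ the ramification is tame, so the different exponent at the unique place of $F_1$ above $P_j$ is $\ell-1$; then \eqref{eq:hgf} reads $2g(F_1)-2=\ell(-2)+(\ell-1)=-\ell-1<0$, again impossible. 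Either way we contradict $g(F_1)\geq 0$, and therefore $\ell=p$.

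The only point that needs care is the claim that the ramification index of $P_i$ in $F_1/F_0$ equals the order of the image of the inertia group $I(Q\mid P_i)\le G$ under the projection $G\to G/H$; this is the standard description of inertia in a subextension of a Galois extension, and here it is especially clean because all residue fields coincide with $K$. Once that is in place the coprimality $\gcd(e_i,\ell)=1$ does the rest with no real computation. It is worth noting that $\ell\neq p$ is used only to guarantee tameness of $P_j$ in $F_1/F_0$ (so that $d=e-1$), which is exactly why the hypothesis $\ell=p$ cannot be excluded; the argument runs in parallel to Lemma~\ref{lemma1}, which likewise passes to a degree-$\ell$ subextension of $F/F_0$ and extracts a contradiction from \eqref{eq:hgf}.
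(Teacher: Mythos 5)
Your proof is correct and follows essentially the same route as the paper: pass to a normal subgroup $H$ of index $\ell$ (using nilpotency), observe that only the place whose ramification index is divisible by $\ell$ can ramify in $F^H/F_0$, and derive a contradiction from the Hurwitz Genus Formula using tameness ($d=\ell-1$) when $\ell\neq p$. The only difference is that you spell out the inertia-group argument and the everywhere-unramified subcase, which the paper leaves implicit.
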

\begin{proof}
	Let $H$ be a normal subgroup of $G$ of index $[G:H]=\ell  $ and  $ F_1=F^H $. 
	Then there is only one place of $F_0$, which is ramified in $F_1/F_0$, say $P_1$. Suppose that $P_1$ is tamely ramified; equivalently, $\ell\neq\mathrm{char}(K)$, which is $p$. Then by Equation~\eqref{eq:hgf} 
	$$
	2g(F_1)-2 = \ell\bigg(-2+\frac{d_1}{e_1}\bigg)= \ell\bigg(-2+\frac{\ell-1}{\ell}\bigg)=-\ell-1< -2,
	$$
	which is a contradiction.
\end{proof}

\begin{lemma}\label{lem:two}
Suppose that $\ell$ is a prime dividing $N$ and $\ell \neq \mathrm{char}(K)$. Say $N=\ell^aN_1$ for some integers $a, N_1 \geq 1$ such that $\gcd(\ell,N_1)=1$. Then we have the following.
\begin{itemize}
\item[(i)] There exist at least two places, whose ramification indices are divisible by $\ell$.
\item[(ii)] If there are exactly two places, whose ramification indices are divisible by $\ell$, then their ramification indices are divisible by $\ell^a$.
\end{itemize} 
\end{lemma}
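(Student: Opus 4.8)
The plan is to derive both parts from a single tamely ramified Galois subextension produced by the nilpotency of $G$. Write $N=\ell^aN_1$ with $\gcd(\ell,N_1)=1$ as in the statement. Since $G$ is nilpotent it is the internal direct product of its Sylow subgroups; let $P_\ell$ be its Sylow $\ell$-subgroup and let $Q$ be the product of the remaining Sylow subgroups, so that $Q\trianglelefteq G$, $|Q|=N_1$, and $G/Q\cong P_\ell$ has order $\ell^a$. Set $M:=F^Q$; then $F_0\subseteq M\subseteq F$, the extension $M/F_0$ is Galois with group $G/Q$ of order $\ell^a$, and it is tamely ramified because $\ell\neq p$. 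The first step is to translate the ramification data of $F/F_0$ into that of $M/F_0$: for a place $P_i$ of $F_0$ choose places $Q_i\in\mathbb{P}_F$ and $R_i\in\mathbb{P}_M$ with $Q_i\mid R_i\mid P_i$. Multiplicativity of ramification indices in the tower gives $e_i=e(Q_i\mid R_i)\,e(R_i\mid P_i)$, where $e(R_i\mid P_i)$ divides $[M:F_0]=\ell^a$ and $e(Q_i\mid R_i)$ divides $[F:M]=N_1$; since $\gcd(\ell,N_1)=1$, this forces $e(R_i\mid P_i)=\ell^{v_\ell(e_i)}$, the $\ell$-part of $e_i$. Moreover, a place of $F_0$ unramified in $F/F_0$ stays unramified in $M/F_0$. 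Consequently the places ramified in $M/F_0$ are exactly the $P_i$ with $\ell\mid e_i$; such a $P_i$ is totally ramified in $M/F_0$ if and only if $\ell^a\mid e_i$, and if $\ell^a\nmid e_i$ then its ramification index $e_i':=e(R_i\mid P_i)=\ell^{v_\ell(e_i)}$ satisfies $\ell\le e_i'\le\ell^{a-1}$.

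Next apply the Hurwitz Genus Formula~\eqref{eq:hgf} to the tame Galois extension $M/F_0$, using $g(F_0)=0$ and $d(P_i)=e_i'-1$. This yields
$$2g(M)-2=\ell^a\left(-2+\sum_{i\,:\,\ell\mid e_i}\left(1-\frac{1}{e_i'}\right)\right),\qquad\text{with }e_i'=\ell^{v_\ell(e_i)}\ge\ell\ge 2.$$
For part~(i): if none of the $e_i$ were divisible by $\ell$ the right-hand side would equal $-2\ell^a\le-4$; if exactly one were, say $e_1$, it would equal $-\ell^a(1+1/e_1')<-2$. Both contradict $g(M)\ge 0$, so at least two of the $e_i$ are divisible by $\ell$. (Part~(i) is also immediate from Lemmas~\ref{lemma1} and~\ref{lemma2}.)

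For part~(ii), suppose $P_1$ and $P_2$ are the only places with $\ell\mid e_i$. Then the sum above has exactly two terms and the identity collapses to $2g(M)-2=-\ell^a(1/e_1'+1/e_2')$, so $g(M)\ge 0$ forces $\ell^a/e_1'+\ell^a/e_2'\le 2$. If $\ell^a\nmid e_1$ then $e_1'\le\ell^{a-1}$, hence $\ell^a/e_1'\ge\ell\ge 2$, while $\ell^a/e_2'\ge 1$; adding these gives $\ell^a/e_1'+\ell^a/e_2'\ge 3$, a contradiction. Hence $e_1'=\ell^a$, i.e.\ $\ell^a\mid e_1$, and by symmetry $\ell^a\mid e_2$.

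The only delicate point is the bookkeeping in the first paragraph — identifying $e(R_i\mid P_i)$ with the $\ell$-part of $e_i$ via multiplicativity of ramification in $F_0\subseteq M\subseteq F$ together with $\gcd(\ell,N_1)=1$, and recording that over each ramified $P_i$ there are $\ell^a/e_i'$ places of $M$ — which is exactly what turns~\eqref{eq:hgf} into the clean formula above; after that everything is an elementary estimate. Nilpotency of $G$ is used precisely once, to produce the normal $\ell$-complement $Q$ and hence a Galois extension $M/F_0$ whose group is the Sylow $\ell$-subgroup of order $\ell^a$.
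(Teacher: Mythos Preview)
Your proof is correct and follows essentially the same approach as the paper: both take the normal subgroup of index $\ell^a$ afforded by nilpotency, pass to the intermediate field, and use the Hurwitz Genus Formula for the resulting tame Galois $\ell$-extension of the rational field $F_0$. The paper appeals to Lemmas~\ref{lemma1} and~\ref{lemma2} for part~(i) and simply asserts ``by Equation~\eqref{eq:hgf}, we conclude that they are totally ramified'' for part~(ii); you spell out the identification $e(R_i\mid P_i)=\ell^{v_\ell(e_i)}$ via multiplicativity in the tower and carry through the Hurwitz inequality explicitly, but the underlying argument is the same.
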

\begin{proof}
By Lemma~\ref{lemma1}, we know that $\ell|e_i$ for some $i \in \{1, \ldots, r \} $. Then $(i)$ follows from 
Lemma~\ref{lemma2} as $\ell\neq \mathrm{char}(K)$. Suppose that there are exactly two places whose ramification indices are divisible by $\ell$. 
Let $H $ be a normal subgroup of $ G$ of index $[G:H]=\ell^a$ and $F_1=F^H$. We consider the Galois extension $F_1/F_0 $ of degree $\ell^a$. Note that there are exactly two ramified places of $F_0$ in $F_1$. Since $F_1/F_0 $ is a tame extension and $g(F_0)=0$, by Equation~\eqref{eq:hgf}, we conclude that they are totally ramified, which proves $(ii)$.
\end{proof}

\begin{lemma}\label{1wild} Let $p=\mathrm{char}(K)$ and $ |G| = p^aN_1 $ for some integers $a, N_1\geq 1$ such that $ \gcd(p,N_1)~=~1 $. Let $P$ be a wildly ramified place of $F_0$ in $F$ with ramification index  $e(P)=p^tn$ for some positive integers $t\leq a$ and $n$ such that $\gcd (p,n)=1$. Then we have the following.
	\begin{enumerate}[(i)]
	\item\label{different} $ d(P) \geq (e(P)-1) + n(p^t-1) $.
		\item\label{order} If $P$ is the unique wildly ramified place of $F_0$ in $F$, then $t=a$.
	\end{enumerate}
\end{lemma}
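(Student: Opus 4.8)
The plan is to prove (i) by localizing the ramification at $P$, replacing $F/F_0$ by the totally ramified Galois extension $F/F'$ where $F'$ is the decomposition field of a place $Q\mid P$, and then splitting the local group into its tame and wild parts, placing the \emph{tame} part at the top of the tower so that transitivity of the different lets its ramification index amplify the wild contribution. In detail: fix $Q\in\mathbb{P}_F$ with $Q\mid P$ and let $D\le G$ be its decomposition group; since $K$ is algebraically closed, $D$ is also the inertia group and $|D|=e(P)=p^t n$. With $F':=F^D$ and $Q':=Q\cap F'$ one has $e(Q\mid Q')=|D|=e(Q\mid P)$, hence $e(Q'\mid P)=1$, so $d(Q'\mid P)=0$ and transitivity of the different gives $d(P)=d(Q\mid P)=d(Q\mid Q')$. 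As $D$ is nilpotent and $\gcd(p,n)=1$, write $D=V\times C$ with $V$ the normal Sylow $p$-subgroup, $|V|=p^t$, $|C|=n$, and consider the tower $F\supseteq F^C\supseteq F'$ with $Q_C:=Q\cap F^C$. Here $F/F^C$ is tame of degree $n$ and totally ramified, so $d(Q\mid Q_C)=n-1$ and $e(Q\mid Q_C)=n$; while $F^C/F'$ is Galois with group $\cong V$, of degree $p^t$, totally (hence wildly, as $t\ge 1$) ramified, so Hilbert's different formula gives $d(Q_C\mid Q')=\sum_{i\ge 0}(|V_i|-1)\ge (|V_0|-1)+(|V_1|-1)=2(p^t-1)$, using $V_0=V_1=V$ since $V$ is a $p$-group. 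Transitivity of the different then yields
\[
d(P)=d(Q\mid Q_C)+e(Q\mid Q_C)\,d(Q_C\mid Q')\ \ge\ (n-1)+2n(p^t-1)=(e(P)-1)+n(p^t-1),
\]
which is (i).

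For (ii), assume $P$ is the unique wildly ramified place and suppose, for contradiction, that $t<a$. By nilpotency write $G=G_p\times G_{p'}$ with $|G_p|=p^a$, and set $F_p:=F^{G_{p'}}$; as $G_{p'}\trianglelefteq G$, the extension $F_p/F_0$ is Galois with $\mathrm{Gal}(F_p/F_0)\cong G_p$. This extension is unramified outside $P$: if $P'\ne P$ is ramified in $F/F_0$ it is tamely ramified, so its inertia group $G_{Q'}$ has order prime to $p$, hence $G_{Q'}\subseteq G_{p'}$ and $P'$ is unramified in $F_p/F_0$. Writing $D=G_Q=V\times C$ as above, every $p'$-element of $D$ lies in $C$, so $D\cap G_{p'}=C$ and the inertia group $I$ of $P$ in $\mathrm{Gal}(F_p/F_0)\cong G/G_{p'}$, being the image of $D$, has order $p^t$. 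Since $\mathrm{Gal}(F_p/F_0)$ is a $p$-group of order $p^a>p^t$, $I$ is a proper subgroup, hence is contained in some maximal subgroup $M_0$, which is normal of index $p$. Then $M:=F_p^{M_0}$ is a degree-$p$ extension of $F_0$ that is unramified everywhere: above $P$ because $I$ and all its conjugates lie in $M_0$ (by normality of $M_0$), and elsewhere because $F_p/F_0$ already is. Applying the Hurwitz Genus Formula \eqref{eq:hgf} to $M/F_0$ with $g(F_0)=0$ gives $2g(M)-2=-2p$, i.e.\ $g(M)=1-p<0$, a contradiction; hence $t=a$.

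The crux, and where a naive attempt breaks down, is the different estimate in (i): applying Hilbert's formula directly to the inertia group $D$ produces only $d(P)\ge (e(P)-1)+(p^t-1)$, short of the claimed bound by a factor of $n$ in the wild term. The essential trick is that routing the tame subextension $F/F^C$ through the top of the tower makes its ramification index $n$ multiply the wild different $d(Q_C\mid Q')\ge 2(p^t-1)$ in the transitivity formula; this is exactly the missing factor. In (ii) the decisive input is the elementary group theory that in a $p$-group every proper subgroup lies in a normal subgroup of index $p$, which converts the hypothesis $t<a$ into an everywhere-unramified $\mathbb{Z}/p$-cover of the rational function field $F_0$ — forbidden by the Hurwitz Genus Formula.
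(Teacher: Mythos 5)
Your proof is correct, and both parts rest on the same mechanisms as the paper's: in (i), transitivity of the different with the tame part placed \emph{above} the wild part so that its ramification index multiplies Hilbert's bound $2(p^t-1)$, and in (ii), the production of an everywhere-unramified degree-$p$ Galois cover of the rational field $F_0$, which is impossible by the genus formula. The scaffolding differs slightly. For (i) the paper argues globally: nilpotency gives a normal subgroup $H\leq G$ of index $p^a$ (the Hall $p'$-part), and the computation is done in the tower $F\supseteq F^H\supseteq F_0$, where degree considerations force $e(P'|P)=p^t$ below and $e(P''|P')=n$ above, yielding $d(P)=n\,d(P'|P)+(n-1)\geq 2n(p^t-1)+(n-1)$ --- numerically identical to your estimate. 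You instead localize at the decomposition (=\ inertia) field and split the inertia group as $V\times C$; this is equivalent, needs only nilpotency of the inertia group, and trades the global normal subgroup for the decomposition-field facts $e(Q'|P)=f(Q'|P)=1$. For (ii) your write-up is actually more careful than the paper's as printed: the paper's proof begins by asserting via Lemma~\ref{lem:two}(ii) that $G$ is a $p$-group and then shows $P$ is totally ramified in $F$, which does not match the situations where the lemma is later applied (e.g.\ type $(2,2,2,2^sm)$ with $N=2^ap^b$); the intended argument is evidently the one you give, carried out in the $p$-quotient $F_1=F^H$ over $F_0$ (your $F_p$ is this same field), where the tame places become unramified, the inertia of $P$ has order $p^t$, and a proper inertia subgroup of a $p$-group lies in a normal subgroup of index $p$, producing the forbidden unramified degree-$p$ cover. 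You make explicit the two points the paper leaves implicit --- unramifiedness of $F_p/F_0$ outside $P$ and the order of the image of the inertia group in the quotient --- so your version can stand as a corrected form of the paper's argument.
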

\begin{proof}  
Let $H $ be a normal subgroup of $ G$ of index $[G:H]=p^a$ and $F_1=F^H$. Then $ F_1/F_0 $ is a Galois $ p $-extension of degree $ p^a $.
	\begin{enumerate}[(i)]
	\item  
		Let $P'\in \mathbb{P}_{F_1}$ and $P''\in \mathbb{P}_{F}$ such that $P''| P'|P$.
		Then by the transitivity of the different exponent \cite[Corollary 3.4.12 ]{thebook},
		$$
		d(P)= e(P''|P')\cdot d(P'|P)+d(P''|P') = n\cdot d(P'|P) + (n-1).
		$$
		Also, $d(P'|P)\geq 2(p^t-1)$ by Hilbert's Different Formula \cite[Theorem 3.8.7]{thebook}; hence, 
		\begin{align*}
		d(P)\geq 2n(p^t-1)+(n-1)= (np^t -1) + n(p^t-1).
		\end{align*}
		Then the fact that $e(P)=np^t$ gives the desired result.
		
	\item By Lemma~\ref{lem:two}-$ (ii)$, we conclude that $ G $ is a $ p $-group. Suppose that $ P $ is not totally ramified in $ F$. Let $ P' $  be a place of $ F $ lying over $ P $. We denote by $ G_T(P'|P) $ the inertia group of $ P $. Note that since $P$ is not totally ramified $ G_T(P'|P) $ is a proper subgroup of $ G $. Then the fact that $ G $ is solvable implies that there exists a normal subgroup $ H $ such that $ G_T(P'|P) \leq H \leq G $ and $ [G:H]=p $. That is, $ F^H $ is a Galois extension of $ F_0 $ of degree $ p $ . Moreover, $ F^H /F_0$ is unramified as inertia group of a place of $ F $ lying over $ P $ lies in $ H $, which is a contradiction.

%
	\end{enumerate}
\end{proof}

\section{Proof of Theorem~\ref{thm:main}}
In this section, we prove Theorem~\ref{thm:main}. We first fix the following notation. We denote by
\begin{itemize}
	\item[] $F/K$ \hspace{3cm}a function field of genus $g\geq 2 $ over an algebraically closed field $ K $ \hspace*{4cm} of characteristic~$ p>0 $,
	\item[] $G \leq \mathrm{Aut}(F/K)$ \hspace{1cm} a nilpotent subgroup of $ \mathrm{Aut}(F/K) $,
	\item[] $N :=|G|>1$,
	\item[] $F_0 := F^G$ \hspace{2.1cm} the fixed field of $ G $.
\end{itemize}
Note that $ F/F_0 $ is Galois extension of degree $ [F:F_0]=N $. By Lemma~\ref{l1}, we can assume that $ g(F_0) = 0 $, that is, $ F_0 $ is rational. Suppose that $ F $ is of type $ (e_1,\ldots,e_r) $, where $ r $ is the number of places of $ F_0 $ that are ramified in $ F/F_0 $. Recall that  $e_1 \leq \ldots \leq e_r $. We will prove Theorem~\ref{thm:main} according to the number $ r $.
\begin{theorem}
	If $ r \geq 5 $, then $ N \leq 4(g-1) $.
\end{theorem}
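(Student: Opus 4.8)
The plan is to use the Hurwitz Genus Formula in the form of Equation~\eqref{eq:hgf} with $g(F_0)=0$, which gives
$$
2g-2 = N\left(-2 + \sum_{i=1}^{r}\frac{d_i}{e_i}\right),
$$
and to bound the sum $\sum_{i=1}^r d_i/e_i$ from below by a quantity strictly larger than $2$, with enough of a gap to force $N\le 4(g-1)$. The elementary estimate $d_i/e_i \ge (e_i-1)/e_i = 1 - 1/e_i$ (with equality in the tame case, and strict improvement available from Lemma~\ref{1wild}\eqref{different} in the wild case) shows that
$$
-2 + \sum_{i=1}^{r}\frac{d_i}{e_i} \ge r - 2 - \sum_{i=1}^{r}\frac{1}{e_i}.
$$
Since each $e_i\ge 2$, the right-hand side is at least $r-2 - r/2 = r/2 - 2$, which for $r\ge 5$ is at least $1/2$. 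This already yields $2g-2 \ge N/2$, i.e.\ $N \le 4(g-1)$, and the conclusion follows.

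The one point that needs care is whether the crude bound $e_i\ge 2$ for every $i$ is enough, or whether the borderline cases (say $r=5$ with several $e_i=2$) actually occur and still satisfy the inequality. With $r=5$ and all $e_i=2$ we get the lower bound exactly $5 - 2 - 5/2 = 1/2 > 0$, so the argument goes through with room to spare; larger $r$ only helps. Thus no case analysis on the type $(e_1,\dots,e_r)$ is really required here — the main content is simply plugging the differential-exponent lower bound into~\eqref{eq:hgf} and observing that $r\ge 5$ makes the bracket at least $1/2$.

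I expect the only mild obstacle to be bookkeeping: one should note that $r$ in the statement counts the ramified places of $F_0$, so every $e_i$ genuinely satisfies $e_i\ge 2$, and that by Lemma~\ref{l1} the reduction to $g(F_0)=0$ is already in force, so \eqref{eq:hgf} applies in the stated shape. No appeal to nilpotency of $G$ or to the wild-versus-tame dichotomy is needed for this particular bound; those tools are reserved for the harder small-$r$ cases ($r\le 4$) treated later. So the proof is short: invoke \eqref{eq:hgf}, apply $d_i/e_i \ge 1 - 1/e_i \ge 1/2$, sum to get $2g-2 \ge N(r/2 - 2) \ge N/2$, and conclude $N \le 4(g-1)$.
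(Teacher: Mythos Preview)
Your proposal is correct and is essentially identical to the paper's own proof: the paper applies Equation~\eqref{eq:hgf} with $g(F_0)=0$, uses $d_i/e_i \ge (e_i-1)/e_i \ge 1/2$ for each of the $r\ge 5$ ramified places, and concludes $2g-2 \ge N(-2 + 5\cdot \tfrac12) = N/2$, hence $N \le 4(g-1)$. Your additional commentary (about borderline cases and the irrelevance of nilpotency here) is accurate but not needed for the argument.
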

\begin{proof}
	By Equation~\eqref{eq:hgf}, we have the following equalities.
	$$
	2g-2= N\bigg(-2+\sum_{i=1}^{r}\frac{d_i}{e_i}\bigg) \geq N\bigg(-2 + 5\cdot \frac{1}{2}\bigg)=\frac{N}{2},
	$$
	which gives the desired result.
\end{proof}

%

We consider $F$ of type $(e_1,e_2,e_3,e_4)$.
That is, there are exactly $4$ ramified places of $ F_0 $, say $ P_1,P_2,P_3,P_4 $, with ramification indices $ e_1,e_2,e_3,e_4 $ and different exponents $ d_1,d_2,d_3,d_4 $, respectively. Then we have the following result.
\begin{theorem}\label{thm:r=4}
If $r= 4$, then $N\leq 8 (g-1)$.
\end{theorem}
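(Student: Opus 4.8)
The plan is to feed the case $r=4$ into the Hurwitz Genus Formula \eqref{eq:hgf}, which with $g(F_0)=0$ reads
$$2g-2 = N\Big(-2+\sum_{i=1}^{4}\frac{d_i}{e_i}\Big),$$
and to prove that $\sum_{i=1}^{4}\frac{d_i}{e_i}\ge \tfrac94$; this at once gives $2g-2\ge N/4$, i.e. $N\le 8(g-1)$. The baseline estimate is that $d_i\ge e_i-1$ by Dedekind's Different Theorem, hence $\frac{d_i}{e_i}\ge 1-\frac1{e_i}\ge \frac12$ for every $i$ because $e_i\ge 2$; so the trivial bound is only $\sum\ge 2$, and the entire task is to squeeze out the missing $\tfrac14$. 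I would organize the argument by the size of $e_4=e_r$.

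If $e_4\ge 4$, then $\frac{d_4}{e_4}\ge 1-\frac1{e_4}\ge \frac34$, and combined with $\frac{d_i}{e_i}\ge\frac12$ for $i=1,2,3$ this already yields $\sum\ge \frac94$. If $e_4=3$, then each $e_i\in\{2,3\}$; writing $k\ge 1$ for the number of indices with $e_i=3$, the bounds $\frac{d_i}{e_i}\ge \frac23$ (for those $i$) and $\ge\frac12$ (for the rest) give $\sum\ge 2+\frac k6$, which is $\ge \frac73>\frac94$ as soon as $k\ge 2$. The only leftover subcase with $e_4=3$ is type $(2,2,2,3)$: here $3$ divides exactly one of $e_1,\dots,e_4$, so Lemma~\ref{lemma2} forces $p=3$, hence $P_4$ is wildly ramified with $e(P_4)=p$, and Lemma~\ref{1wild}\eqref{different} gives $d_4\ge (e_4-1)+(p-1)=4$, whence $\frac{d_4}{e_4}\ge\frac43$ and $\sum\ge \frac32+\frac43>\frac94$.

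The remaining case $e_4=2$ is type $(2,2,2,2)$. If all four places were tame we would have $d_i=1$ and $\sum=2$, forcing $2g-2=0$, contradicting $g\ge 2$; hence some $P_i$ is wildly ramified and $p\mid e_i=2$, so $p=2$. Then $e_i=p$ for all $i$, each $P_i$ is wildly ramified, and Lemma~\ref{1wild}\eqref{different} gives $d_i\ge (e_i-1)+(p-1)=2$, so $\frac{d_i}{e_i}\ge 1$ and $\sum\ge 4>\frac94$. In every case $\sum_{i=1}^{4}\frac{d_i}{e_i}\ge\frac94$, completing the proof.

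I expect the genuine difficulty to be exactly the two borderline types $(2,2,2,2)$ and $(2,2,2,3)$, where the elementary bound $d_i\ge e_i-1$ is too crude — it only produces $2g-2\ge 0$. The point is to recognize that in these two configurations wild ramification is unavoidable (forced by the genus hypothesis in the first type and by Lemma~\ref{lemma2} in the second), and then to upgrade the different estimate at a wild place using Lemma~\ref{1wild}\eqref{different}. Once this is in place, everything else is a routine verification of a handful of fractions.
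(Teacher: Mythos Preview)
Your proof is correct and genuinely different from the paper's. The paper organizes its case analysis by the small end, branching first on $e_2$, then on $e_3$, and finally on $e_4$; along the way it invokes Lemmas~\ref{lemma1}, \ref{lemma2}, \ref{lem:two}, and \ref{1wild} to rule out or handle each of the types $(2,2,3,e_4)$ and $(2,2,2,e_4)$ separately, obtaining in most subcases bounds sharper than $8(g-1)$. You instead aim uniformly at the single inequality $\sum d_i/e_i\ge 9/4$ and branch only on $e_4$, bringing in wild ramification (via Lemma~\ref{lemma2} and Lemma~\ref{1wild}\eqref{different}) exactly at the two borderline types $(2,2,2,3)$ and $(2,2,2,2)$. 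Your route is shorter and avoids Lemma~\ref{lem:two} entirely; the paper's route, on the other hand, directly yields the refined information recorded in the Remark following the theorem (that equality forces type $(2,2,2,2^s)$ with $\mathrm{char}(K)\neq 2$ and $g-1$ a $2$-power), which your argument gives only after a small extra step of tracking when $\sum d_i/e_i=9/4$.
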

\begin{proof}
Note that if $e_2 \geq 3 $, then $ N\leq 4(g-1) $ since by Equation~\eqref{eq:hgf} 
$$
2g-2 = N\bigg(-2+\sum_{i=1}^{4}\frac{d_i}{e_i}\bigg) 
\geq N\bigg(-2 +\frac{1}{2} + 3\cdot \frac{2}{3}\bigg)=\frac{N}{2}.
$$
From now on, we suppose that $e_2 =2$, i.e., $e_1= e_2 =2 $. Similarly, by Equation~\eqref{eq:hgf}, if $ e_3 \geq 4 $, then $N \leq 4(g-1)$. Hence, we investigate $e_3\leq 3 $ into cases as follows.
\begin{itemize}
\item[(i)] $ e_3= 3 $:\\
If $e_4 \geq 6$, then 
$$
2g-2 \geq N \bigg(-2 + 2\cdot \frac{1}{2} + \frac{2}{3} + \frac{5}{6}\bigg)= \frac{N}{2}, 
$$
which implies that $N \leq 4 (g-1)$.

\noindent Note that $e_4\neq 5$ by Lemma~$\ref{lemma2}$, i.e., $F$ is either of type $ (2, 2, 3, 4) $ or of type  $ (2, 2, 3, 3) $. 

\noindent Assume that $F$ is of type $ (2, 2, 3, 4) $. Then $\mathrm{char}(K) = 3 $ by  Lemma~\ref{lemma2}; hence,
$$
2g-2 \geq N \bigg(-2 +2\cdot \frac{1}{2}+\frac{2\cdot (3-1)}{3}+\frac{3}{4}  \bigg)>  N, 
$$
i.e., $N<2(g-1)$.

\noindent Assume that $F$ is of type $(2, 2, 3, 3)$. Then  $ N = 2^a 3^b $ for some positive integers $ a$ and $b$ by Lemma~\ref{lemma1}. If $\mathrm{char}(K) = 2$ or $3$, then there are two wildly ramified places. Therefore, by 
Equation~\eqref{eq:hgf}, we obtain that $N \leq 2(g-1)$. Assume that $ \mathrm{char}(K) >3 $. By Lemma~\ref{lem:two}$-(ii)$, we conclude that $a=b=1$, i.e., $N=6$. Then by Equation~\eqref{eq:hgf}, we obtain that $g=2$; hence, $N= 6(g-1)$. 

\item[(ii)] $e_3 = 2$: \\
Write $ e_4 = 2^s m $ for some integers $s \geq 0$ and $m\geq 1$ such that $\gcd (2,m)=1$. That is, $F$ is of type $(2,2,2,2^s m)$. 

\noindent  If $m > 1$, then $m=\ell^t$ for a prime $\ell > 2$ and an integer $t\geq 1$. By Lemma~\ref{lemma2}, we conclude that $\ell= p$, where $p$ is $\mathrm{char}(K)$. Moreover, $N=2^ap^b$ for some integers $a$, $b$ such that 
$a\geq 1$ and $b\geq t$ by Lemma~\ref{lemma1}. As there is a unique wild ramification, $t=b$ by Lemma~\ref{1wild}$-(ii)$, i.e., $e_4=2^sp^b $, and $d_4\geq (2^sp^b-1)+2^s(p^b-1)$. Then by Equation~\eqref{eq:hgf}
\begin{align*}
	2g-2 &= N  \bigg( -2 + 3 \cdot \frac{1}{2} +\frac{(2^sp^b-1)+2^s(p^b-1)}{2^sp^b} \bigg) \geq N,
	\end{align*}
i.e., $N\leq 2(g-1)$.

\noindent If $m = 1$, then $F$ is of type $(2,2,2,2^s)$ and $N=2^a$. If $\mathrm{char}(K) =2$, then $N \leq g-1$. Suppose that $\mathrm{char} (K) > 2$. Then $ P_1, P_2,P_3, P_4 $ are all tamely ramified in $ F $; hence, by Equation~$ \eqref{eq:hgf} $
$$ 
	2g-2 = N \left(-2 + 3\cdot\frac{1}{2} + \frac{2^s-1}{2^s}\right).
	$$
	Note that $ s\geq 2 $ since $ g \geq 2 $, and  $N=\displaystyle\frac{2^{s+1}}{2^{s-1}-1}(g-1) \leq 8(g-1)$. 
\end{itemize}
\end{proof}

\begin{remark}
Note that if the bound $8(g-1)$ is attained by $F$, then $g-1$ is a power of $2$, $F$ is of type $(2,2,2,2^s)$ for some integer $s\geq 2$, and $ \mathrm{char} (K) \neq 2 $.
\end{remark}

%
Now we consider $F$ of type $(e_1,e_2,e_3)$.  
That is, there are exactly $3$ ramified places of $ F_0 $, say $ P_1,P_2,P_3 $, with ramification indices $ e_1,e_2,e_3 $ and different exponents $ d_1,d_2,d_3$, respectively. Then we have the following result.

\begin{theorem}\label{thm:r=3}
 If $r=3$, then $N\leq 16 (g-1)$.
\end{theorem}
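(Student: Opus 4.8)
The plan is to feed the data of an $F$ of type $(e_1,e_2,e_3)$ into the Hurwitz Genus Formula \eqref{eq:hgf}, which with $g(F_0)=0$ and $r=3$ reads
$$
2g-2 \;=\; N\Bigl(-2+\tfrac{d_1}{e_1}+\tfrac{d_2}{e_2}+\tfrac{d_3}{e_3}\Bigr),
$$
so it suffices to prove $-2+\sum_{i=1}^{3}\tfrac{d_i}{e_i}\geq\tfrac18$. Using only Dedekind's bound $d_i/e_i\geq 1-1/e_i$, this already holds whenever $\tfrac1{e_1}+\tfrac1{e_2}+\tfrac1{e_3}\leq\tfrac78$, so all the work is concentrated in the finitely many ``critical'' triples with $\sum 1/e_i>\tfrac78$. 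I would dispose of the easy ranges first: if $e_1\geq 4$ then $\sum 1/e_i\leq 3/4$ and $N\leq 8(g-1)$; if $e_1=3$ then (leaving $(3,3,3)$ and $(3,3,4)$ for later) the tame bound gives $N\leq 12(g-1)$ when $e_2\geq 4$ and $N\leq 15(g-1)$ when $e_2=3,\ e_3\geq 5$; and if $e_1=2$ I would split on $e_2$, where $e_2\geq 6$ gives $N\leq 12(g-1)$, $e_2=5$ with $e_3\geq 6$ gives $N\leq 15(g-1)$, and $e_2=4$ with $e_3\geq 8$ gives $N\leq 16(g-1)$ (with equality only if $e_3=8$). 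What remains is a short explicit list of critical types: $(2,2,e_3)$ (all $e_3$), $(2,3,e_3)$ with $e_3\leq 23$, $(2,4,e_3)$ with $4\leq e_3\leq 7$, $(2,5,5)$, $(3,3,3)$ and $(3,3,4)$.

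For each remaining type I would invoke the nilpotency lemmas. The structural inputs are: the primes dividing $N$ are exactly those dividing some $e_i$ (Lemma~\ref{lemma1}); a prime dividing exactly one of the $e_i$ equals $p=\mathrm{char}(K)$ (Lemma~\ref{lemma2}); and a prime $\ell\neq p$ dividing $N$ but only two of the $e_i$ must divide both of those to the full $\ell$-power of $N$ (Lemma~\ref{lem:two}(ii)). Two effects then finish the job. First, several purely tame critical triples cannot occur at all: either the genus formula forces $g\leq 1$ (this kills tame $(2,2,e_3)$ with $p\nmid e_3$, and $(2,2,2)$, $(2,4,4)$, $(3,3,3)$ when $p\nmid N$), or, e.g.\ for $(2,3,12)$ and $(2,3,18)$ with $p\notin\{2,3\}$, Lemma~\ref{lem:two}(ii) applied to the primes $2$ and $3$ forces $N=6$, which cannot be divisible by $e_3$; and since $\mathrm{char}(K)$ is a single prime, a critical $(2,3,e_3)$ cannot contain a prime factor $\geq 5$ in $e_3$ unless $e_3\geq 24$, so among $7\leq e_3\leq 23$ only $e_3\in\{8,9,12,16,18\}$ survives. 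Second, every surviving critical type carries wild ramification, and there Lemma~\ref{1wild} is decisive: if $P$ is wild with $e(P)=p^tn$ then $d(P)\geq(e(P)-1)+n(p^t-1)$, and if $P$ is the unique wild place then $t$ equals the exponent of $p$ in $N$ (Lemma~\ref{1wild}(ii)), which together with Lemma~\ref{lem:two}(ii) also pins down the prime-to-$p$ part of $N$. For example $(2,3,8)$ and $(2,3,16)$ force $p=3$ with $P_2$ the unique wild place of order $3$, hence $d_2\geq 4$ and $-2+\sum d_i/e_i\geq\tfrac{17}{24}$, resp.\ $\tfrac{37}{48}$; $(2,3,9)$ forces $p=2$, $d_1\geq 2$; and for $(2,4,5)$, $(2,4,6)$, $(2,4,7)$, $(2,5,5)$, $(3,3,4)$, $(2,2,e_3)$ and the remaining $(2,3,e_3)$, the same kind of one-line substitution into \eqref{eq:hgf} puts $-2+\sum d_i/e_i$ comfortably above $\tfrac18$, typically giving $N\leq c\,(g-1)$ with $c$ a small constant.

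I would present each surviving case as a single displayed inequality obtained by plugging the Lemma~\ref{1wild}(i) different bound and the normalizations on $N$ into \eqref{eq:hgf}. For use in the subsequent analysis of the equality case I would also record that, among all of these, the value $16(g-1)$ is reached only in the tame situation of type $(2,4,8)$, so that attaining the bound forces $\mathrm{char}(K)\neq 2$, $F$ of type $(2,4,8)$, and $g-1$ a power of $2$. The main obstacle is not any individual estimate but the bookkeeping of the case tree: for each critical triple one must split according to which prime equals $p$, according to whether a given prime divides one, two, or all three of the $e_i$ (so that Lemma~\ref{lem:two} is applied only where it is legitimate), and according to whether there is one wild place or two. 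The cases easiest to overlook are the one in which $p$ divides none of the $e_i$, so that the whole extension is tame, and the one in which a prime divides all three $e_i$, so that Lemma~\ref{lem:two}(ii) is unavailable and one must instead read off the constraints directly from \eqref{eq:hgf}.
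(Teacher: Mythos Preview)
Your plan is correct and uses the same ingredients as the paper: the Hurwitz formula \eqref{eq:hgf} together with Lemmas~\ref{lemma1}--\ref{1wild} to control the ``critical'' types where the bare Dedekind bound $d_i/e_i\geq 1-1/e_i$ does not already give $1/8$. The only difference is organizational. The paper, after disposing of $e_1\geq 4$, fixes $(e_1,e_2)$ and then uses Lemmas~\ref{lemma2}, \ref{lem:two}(ii) and \ref{1wild}(ii) to \emph{parametrize} the admissible $e_3$ by its prime factorization (e.g.\ for $(2,4,\cdot)$ one has $e_3=2^a\ell^b$ with $\ell=p$ if $b>0$), so that each subfamily is handled with a single displayed estimate rather than by enumerating individual values of $e_3$. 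Your route instead enumerates the critical triples explicitly and treats each one; this is equally valid but generates a longer list, and several of your listed triples---$(2,4,5)$, $(2,4,7)$, $(2,3,4)$---are in fact excluded outright by the divisibility $e_i\mid N$ once Lemma~\ref{lem:two}(ii) pins down the prime-to-$p$ part of $N$, so they never reach the Hurwitz computation. The paper also economizes by noting at the very start of the $e_1=2$ case that if $\mathrm{char}(K)=2$ then $d_1/e_1\geq 1$ already gives $N\leq 6(g-1)$, so the rest of that branch may assume $p\neq 2$; you would discover the same fact piecemeal inside your case tree. Either organization reaches the bound $16(g-1)$, attained only for the tame type $(2,4,8)$ with $p\neq 2$.
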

\begin{proof}
If $e_1 \geq 4$, then $N\leq 8(g-1)$ by Equation~\eqref{eq:hgf}. Therefore, we investigate $e_1\leq 3 $ into cases as follows.
\begin{itemize}
\item[(i)] $e_1 = 3 $:\\
Note that if $e_2 \geq 5$, then $N < 8(g-1)$ by Equation~\eqref{eq:hgf}. 
\begin{itemize}
\item[(a)] $ e_2=4 $:\\
Then $F $ is of type $(3,4,e_3)$. By Lemma~\ref{lemma2}, the ramification index $ e_3 $ can have at most one prime divisor $ \ell >3 $. Then $ e_3 = 2^a3^b\ell^c $ for some integers $a,b,c \geq 0$ and $N= 2^s3^t\ell^c $ for some integers $s\geq 2 $ and
$t \geq  1 $. 

If $c > 0 $, then $ \ell=  p $ by Lemma~\ref{lemma2} and $ a= 2 $, $b=1 $ by Lemma~\ref{lem:two}, i.e., $ e_3 = 12p^c $. Also, by Lemma~\ref{1wild}, $d_3\geq (12p^c-1) + 12(p^c-1)$. Then by Equation~\eqref{eq:hgf}, we have $N< 2(g-1)$.

Assume that $c=0$. Note that $a=0$ is not possible in this case; otherwise $F$ would be of type $(3,4,3)$ by Lemma~\ref{lem:two}$-(ii)$. If $b=0$, then $\mathrm{char}(K)=3$ and $F$ is of type $(3,4,4)$. As $P_1$ is wildly ramified, $N < 4(g-1)$ by Equation~\eqref{eq:hgf}. If $a,b\neq 0$, i.e., $e_3\geq 6$, then $N \leq 8(g-1)$ by Equation~\eqref{eq:hgf}. 

\item[(b)] $ e_2=3 $:\\
By similar arguments, $F $ is of type $(3,3,e_3)$, where $e_3=3^a\ell^b$ for a prime $\ell\neq 3$ and integers $a,b~\geq~0$. Then by Lemma~\ref{lemma1} and Lemma~\ref{1wild}, $N= 3^s\ell^b $ for an integer $s\geq 1$. 

If $b > 0 $, then $\ell=p$. By Lemma~\ref{1wild}, $d_3\geq (3^ap^b-1) +3^a(p^b-1)$; hence, $N < 4(g-1)$ by 
Equation~\eqref{eq:hgf}. 

Now suppose that $b=0$. If $a=1$, then $\mathrm{char} (K)=3$; otherwise $ g=1 $. 
That is, all places are wildly ramified; hence, $N\leq (g-1)$ by Equation~\eqref{eq:hgf}.
If $a>1$, then $N \leq 9(g-1)$.
\end{itemize}

\item[(ii)] $e_1 = 2 $:\\
If $\mathrm{char}(K)=2$, i.e., $P_1$ is wildly ramified, then $ N \leq 6(g-1) $ by Equation~\eqref{eq:hgf}. From now on, we suppose that $\mathrm{char}(K) >2$. If $e_2 \geq 6$, then $N\leq 12(g-1)$ by Equation~\eqref{eq:hgf}. We investigate $e_2 \leq 5$ into cases as follows.
\begin{itemize}
\item[(a)] $e_2=5$:\\
Then $F $ is of type $(2,5,e_3)$, where $ e_3 = 2^a5^b\ell^c $ for a prime $ \ell \neq 2,5 $ and integers $a,b,c \geq 0$. As $\mathrm{char}(K) \neq 2 $, by Lemma~\ref{lem:two} and Lemma~\ref{1wild}, $ e_3 = 2\cdot5^b\ell^c $ and $N= 2\cdot5^t\ell^c $ for an integer $t \geq 1$. 

If $c > 0 $, then $\ell =p$ and $b=t=1$, i.e., $e_3 = 10p^c$.
Then $d_3 \geq (10p^c-1) +10(p^c-1)$ by Lemma~\ref{1wild}; hence, $N < 3(g-1)$ by Equation~\eqref{eq:hgf}.

\noindent If $c=0$, then $b\neq 0$, i.e., $e_3\geq 10$; hence, $N\leq 10(g-1)$ by Equation~\eqref{eq:hgf}.

\item[(b)] $e_2=4$:\\
Then $F $ is of type $(2,4,e_3)$, where $e_3=2^a\ell^b$ for a prime $ \ell > 2 $ and integers $a,b \geq 0$. 

If $b> 0$, then $\ell = p $ and $d_3=(2^ap^b-1)+2^a(p^b -1 )$; hence, $N < 3(g-1)$ by Equation~\eqref{eq:hgf}. In this case, we also have $a\geq 1$.

Suppose that $b=0$. Since $\mathrm{char} (K)\neq 2 $, we have the following equalities by Equation~\eqref{eq:hgf}
$$
		2g-2 = N \bigg(-2 + \frac{1}{2} + \frac{3}{4} + \frac{2^a-1}{2^a}\bigg) 
		= N \bigg(\frac{1}{4} - \frac{1}{2^a}\bigg).
$$
Then the fact that $g\geq 2$ implies that $a \geq 3$; hence, $N \leq 16(g-1)$. 

\item[(c)] $e_2=3$:\\
Then $F $ is of type $(2,3,e_3)$, where $e_3 = 2\cdot3^b\ell^c$ for a prime $\ell > 3$ and integers $b,c \geq 0$.

\noindent If $c > 0 $, then $\ell=p$ and $e_3=6p^c$ by Lemma~\ref{lemma2} and Lemma~\ref{lem:two}$-(ii)$.
 Then by Lemma~\ref{1wild}, i.e., $d_3\geq (6p^c-1) +6(p^c-1)$, and by Equation~\eqref{eq:hgf}, we conclude that $N < 3(g-1)$.

\noindent Suppose that $c=0$; hence, $b\neq 0$. If $\mathrm{char}(K) \neq 3$, then $e_3=6$. By Equation~\eqref{eq:hgf}, we conclude that $g=1$, which is a contradiction. Therefore, $\mathrm{char}(K)= 3$, i.e., $P_2, P_3$ are wildly ramified. Then $N \leq 2(g-1)$ by Equation~\eqref{eq:hgf} and Lemma~\ref{1wild}. 
		
\item[(d)] $e_2=2$:\\
Then $F $ is of type $(2,2,e_3)$, where $e_3=2^a\ell^b$ for a prime $ \ell > 2 $ and integers $a,b \geq 0$.

Suppose that $b=0$. Since $\mathrm{char} (K) \neq 2$ , $F/F_0$ is a tame extension. Then by Equation~\eqref{eq:hgf}, we conclude that $g=0$, which is a contradiction. Therefore, $b> 0$. Then $\ell=\mathrm{char} (K)$, i.e., $\ell=p $. Hence, by Lemma~\ref{1wild} and Equation~\eqref{eq:hgf}, we have the following.
		\begin{align*}
		2g-2 &= N \bigg(-2 + 2\cdot \frac{1}{2} + \frac{(2^ap^b-1)+2^a(p^b-1 )}{2^ap^b}\bigg) \\
		&= N\frac{2^ap^b - 2^a -1}{2^ap^b} \geq N \bigg(1- \frac{2^a+1}{3 \cdot2^a} \bigg) \geq \frac{N}{3}.
		\end{align*}
		Hence, $N \leq 6(g-1)$. 
\end{itemize}

\end{itemize} 
\end{proof}

\begin{remark}
Note that if the bound $16(g-1)$ is attained by $F$, then $g-1$ is a power of $2$, $F$ is of type $(2,4,2^s)$ for some integer $s\geq 3$, and $ \mathrm{char}(K) \neq 2 $.
\end{remark}

We continue investigating $F$ of type $(e_1,e_2)$. 
That is, there are exactly $2$ ramified places of $ F_0 $, say $ P_1,P_2 $, with ramification indices $ e_1,e_2 $  and different exponents $ d_1,d_2$, respectively. Then we have the following result.

\begin{theorem}\label{thm:r=2}
 If $r=2$, then $N\leq 10(g-1)$.
\end{theorem}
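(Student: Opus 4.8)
The plan is to read the possible shapes of $e_1,e_2$ off the structural lemmas and then feed them into the Hurwitz formula~\eqref{eq:hgf}. First I note that not both ramified places can be tame: if they were, then $d_i/e_i=1-1/e_i<1$ for $i=1,2$, so \eqref{eq:hgf} would give $2g-2<N(-2+1+1)=0$, against $g\geq 2$. Hence at least one of $P_1,P_2$ is wildly ramified and $p\mid N$; write $N=p^{a}N_1$ with $\gcd(p,N_1)=1$, $a\geq 1$. Next, for a prime $\ell\neq p$ dividing $N$, Lemma~\ref{lem:two}$-(i)$ forces (as $r=2$) that $\ell$ divides \emph{both} $e_1$ and $e_2$, and then Lemma~\ref{lem:two}$-(ii)$ upgrades this to $\ell^{a_\ell}\mid e_1$ and $\ell^{a_\ell}\mid e_2$, where $\ell^{a_\ell}$ is the exact power of $\ell$ in $N$. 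Running over all such $\ell$ gives $N_1\mid e_i$, and since $e_i\mid N$ the prime-to-$p$ part of each $e_i$ is exactly $N_1$; thus $e_i=p^{s_i}N_1$ with $0\leq s_1\leq s_2\leq a$, and $s_2\geq 1$.

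For each $i$ one has the uniform bound $d_i/e_i\geq 2-\tfrac1{e_i}-\tfrac1{p^{s_i}}$ (an equality reading $1-1/e_i$ when $P_i$ is tame, i.e.\ $s_i=0$, and a consequence of Lemma~\ref{1wild}$-(i)$ when $P_i$ is wild). Substituting into~\eqref{eq:hgf} and using $N/e_i=p^{\,a-s_i}$, $N/p^{s_i}=p^{\,a-s_i}N_1$ gives
\[
2g-2 \;\geq\; 2N-(1+N_1)\bigl(p^{\,a-s_1}+p^{\,a-s_2}\bigr).
\]
Suppose first $s_1\geq 1$, so $P_1$ and $P_2$ are both wild. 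Then $p^{\,a-s_i}\leq p^{\,a-1}$, and the display yields $2g-2\geq N\bigl(2-\tfrac{2(1+N_1)}{pN_1}\bigr)\geq \tfrac{N}{5}$ as soon as $N_1(9p-10)\geq 10$, which holds unless $p=2$ and $N_1=1$. In that remaining subcase $G$ is a $2$-group and $e_i=2^{s_i}$: if $s_2\geq 2$ the display (now with $p^{\,a-s_2}\leq 2^{a-2}$) gives $2g-2\geq N/2$; if $s_1=s_2=1$ then $F$ is of type $(2,2)$, so $d_i\geq 2$ and $g\geq 2$ forces $d_1+d_2\geq 5$, whence $2g-2=N(d_1+d_2-4)/2\geq N/2$. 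So in this whole case $N\leq 4(g-1)$.

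It remains to treat $s_1=0$: then $P_1$ is tame with $e_1=N_1\geq 2$, and $P_2$ is the \emph{unique} wildly ramified place, so Lemma~\ref{1wild}$-(ii)$ gives $s_2=a$; hence $e_2=N$ and $P_2$ is totally ramified. By Lemma~\ref{1wild}$-(i)$, $d_2\geq(N-1)+N_1(p^{a}-1)=2N-N_1-1$, and with $d_1=N_1-1$ the formula~\eqref{eq:hgf} gives $2g-2\geq N-p^{a}-N_1-1$. If $p^{a}+N_1+1\leq\tfrac{4N}{5}$ this yields $2g-2\geq N/5$, i.e.\ $N\leq 10(g-1)$; a short check shows the only coprime pairs $(p^{a},N_1)$ with $p^{a},N_1\geq 2$ violating $5(p^{a}+N_1+1)\leq 4p^{a}N_1$ are $(2,3)$ and $(3,2)$, for which $N=6\leq 10(g-1)$. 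This finishes the proof.

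The hard part is precisely this last case: there is then a single, totally ramified wild place, Lemma~\ref{1wild}$-(i)$ is the only control on $d_2$, and the resulting estimate is essentially sharp — it is here that the constant $10$ (rather than $4$ or $8$) enters, with the borderline $N=10(g-1)$ occurring already at genus $2$. The nuisance is that the clean inequality $5(p^{a}+N_1+1)\leq 4p^{a}N_1$ just barely fails for a couple of small pairs $(p^{a},N_1)$, which must be checked by hand; everywhere else the much stronger bound $N\leq 4(g-1)$ holds, which is why I would organize the write-up around the tame/wild dichotomy for $P_1$ rather than around the size of $e_1$.
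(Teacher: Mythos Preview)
Your argument is correct and follows essentially the same route as the paper: both proofs deduce $e_i=p^{s_i}N_1$ from Lemmas~\ref{lem:two} and~\ref{1wild}, feed Lemma~\ref{1wild}$-(i)$ into Equation~\eqref{eq:hgf}, and split according to whether the smaller place is tame or wild. The paper's primary split is $N_1=1$ versus $N_1>1$ (then tame/wild for $P_1$), whereas you take the tame/wild dichotomy for $P_1$ as primary; this lets you treat the paper's cases $(i)$ and $(ii)(b)$ in a single stroke and replace the paper's subcase list $p^b\in\{2,3,4,\geq 5\}$ in case $(ii)(a)$ by the single inequality $5(p^a+N_1+1)\le 4p^aN_1$, which is a little cleaner.

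One small slip in the write-up: in the $s_1\ge 1$ branch you record only $2g-2\ge N/5$, which yields $N\le 10(g-1)$, yet you then conclude ``in this whole case $N\le 4(g-1)$''. The stronger conclusion is in fact true---one checks $2-\tfrac{2(1+N_1)}{pN_1}\ge \tfrac12$ whenever $N_1(3p-4)\ge 4$, i.e.\ for every pair except $(p,N_1)=(2,1)$---but as written the displayed inequality does not justify it. Since the theorem only asks for $N\le 10(g-1)$, this does not affect validity; just tighten the constant (or the stated conclusion) for consistency.
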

\begin{proof}
We first remark that $F/F_0$ cannot be a tame extension; otherwise we would have that $g=0$. Therefore, we can write $N = p^t N_1$ for some positive integers  $t$ and $N_1 $ such that $\gcd(p,N_1)=1$. Note that $e_1=p^{a}N_1$ and $e_2=p^{b}N_1$ for some integers $0 \leq a,b\leq t$ by Lemma~\ref{lem:two}$-(ii)$. 
\begin{itemize}
\item[(i)] $N_1=1$, i.e.,  $G$ is a $p$-group:\\
Note that if $p^{a}=p^{b}=2$, then the case $d_1=d_2=2$ cannot hold; otherwise we would have that $g=1$. That is, $d_i\geq 3$ for some $i=1,2$, and hence $N\leq 4(g-1)$ by Equation~\eqref{eq:hgf}. 

\noindent Suppose that $p^{a}=p^{b}=2$ is not the case. Since $P_1$ and $P_2$ are ramified with different exponents $d_1\geq 2(p^{a}-1)$ and $d_2\geq 2(p^{b}-1)$, respectively, by Equation~\eqref{eq:hgf}
\begin{align}\label{eq:a,b}
2g-2 \geq N\left( -2+ \frac{2(p^{a}-1)}{p^{a}}+ \frac{2(p^{b}-1)}{p^{b}}\right) 
= N\left(2 -\frac{2}{p^{a}}-\frac{2}{p^{b}} \right)\geq \frac{N}{2}.
\end{align}
Therefore,  $N\leq 4(g-1)$.
\item[(ii)] $N_1>1$:
\begin{itemize}
\item[(a)] Suppose that $ F$ is of the type $ (N_1, N_1 p^b) $. Then $N=N_1 p^b$ by Lemmas~\ref{lem:two} and \ref{1wild}. Note that if $N< 10$, then $N< 10(g-1)$ as $g\geq 2$. Therefore, we suppose that $N \geq 10$. Moreover, by 
Lemma~\ref{1wild} and Equation~\eqref{eq:hgf}
\begin{align}\label{eq:pb}
	2g-2 
	&\geq N \bigg(-2 + \frac{N_1-1}{N_1} + \frac{(N_1p^b-1) +N_1(p^b-1) }{N_1p^b}\bigg) \\ \nonumber
	&= N\bigg(1-\frac{1}{N_1}-\frac{1}{N_1p^b}-\frac{1}{p^b}\bigg).
	\end{align}
Note that if $p^b\geq 5$, then $N\leq 10(g-1)$ by Equation~\eqref{eq:pb}. If $p^b = 4 $ and $N_1\geq 3$ or $p^b = 3$ and $N_1\geq 4$, then $N\leq 6(g-1)$. In the case that $p^b = 2 $ and $N_1\geq 5$, we obtain that $N\leq 10(g-1)$.
\item[(b)] 
Suppose that $ F $ is of type $ (N_1 p^a, N_1 p^b) $, where $0<a\leq b$. Then $d(P_1)\geq  (N_1p^a-1) +N_1(p^a-1) $ and $d(P_2)\geq  (N_1p^b-1) +N_1(p^b-1) $ by Lemma~\ref{1wild}. By Equation~\eqref{eq:hgf}, we obtain that
\begin{align*}
2g-2 
	&\geq N \bigg(2 - \frac{1}{N_1p^a} - \frac{1}{N_1p^b}  - \frac{1}{p^a} - \frac{1}{p^b}\bigg) 
	\geq N \bigg(2 - \frac{2}{N_1p^a}  - \frac{2}{p^a}\bigg) ,
\end{align*}
which implies that $N\leq 3(g-1)$.
\end{itemize}

\end{itemize}
\end{proof}

\begin{remark}\label{rem:2}
The bound $10(g-1)$ is attained only by $F$ of genus $2$ such that $F$ is of type $(5,10)$ if $\mathrm{char}(K)=2$ or $(2,10)$ if $\mathrm{char}(K)=5$.
\end{remark}

\begin{remark}
In \cite[Theorem 3.1]{KM}, the authors proved independently that if $ G $ is a $\ell$-subgroup of $ \mathrm{Aut}(F/K)$, where $\ell\geq 3$ is a prime and $\ell\neq \mathrm{char}(K)$, then $ |G| \leq 9(g - 1) $. They also showed that the equality can only be obtained for $ \ell = 3 $. 
Our analysis of the types of function fields with nilpotent automorphism groups not only leads us the same result, but also provides a bound for all nilpotent subgroups of $\mathrm{Aut}(F/K)$.  
\end{remark}

It remains to consider the case $ r=1 $. 
\begin{theorem}\label{thm:r=1}
	If $ r=1 $, then $\displaystyle N \leq \frac{4p}{(p-1)^2}g^2 $.
\end{theorem}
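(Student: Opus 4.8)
The plan is to first pin down the structure forced by having a single branch point and then run an induction on $|G|$ via ramification groups. Since exactly one place of $F_0$ is ramified in $F$, Lemma~\ref{lem:two}$(i)$ rules out any prime $\ell\ne p$ dividing $N$ (such a prime would force at least two places with $\ell\mid e_i$), so $N=p^a$. Writing $e_1=p^t$ and noting that $P_1$, being ramified, is wildly ramified and hence (by $r=1$) the unique wild place, Lemma~\ref{1wild}$(\ref{order})$ gives $t=a$, so $P_1$ is totally ramified in $F/F_0$. This already yields the structural claims (i)--(iii) of Theorem~\ref{thm:main} apart from the inequality. Substituting $e(P_1)=N$ and $g(F_0)=0$ into \eqref{eq:hgf} gives
$$2g-2=d_1-2N,\qquad\text{i.e.}\qquad d_1=2(g-1+N),$$
where $d_1=d(P_1)$; everything now reduces to bounding $N$ above by $\tfrac{4p}{(p-1)^2}g^2$.

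Let $P\mid P_1$ in $F$ and let $G=G_0=G_1\supseteq G_2\supseteq\cdots\supseteq G_b\supsetneq G_{b+1}=\{1\}$ be its lower ramification filtration. By Hilbert's Different Formula $d_1=\sum_{i\ge 0}(|G_i|-1)$, and cancelling the two terms with $|G_i|=N$ against $-2N$ turns the identity above into the convenient relation
$$2g=\sum_{i\ge 2}(|G_i|-1).$$
If $G_2=G$ (first break $\ge 2$) this already gives $2g\ge N-1$, hence $g^2\ge (N-1)^2/4$; since $x\mapsto (x-1)^2/x$ is increasing and $N\ge p$ we get $(N-1)^2/N\ge (p-1)^2/p$, which is exactly $N\le\tfrac{4p}{(p-1)^2}g^2$. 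So from now on we may assume the first break equals $1$, i.e. $G\supsetneq G_2$.

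For the induction pick $C\le Z(G)\cap G_b$ with $|C|=p$ (possible since $G_b\ne\{1\}$ is normal in $G$, hence meets the centre). Because $C\cap G_i$ equals $C$ for $i\le b$ and $\{1\}$ otherwise, $F/F^C$ is a $\Z/p$-extension with ramification break $b$, so its different exponent at $P$ is $(b+1)(p-1)$. Applying transitivity of the different to $F_0\subset F^C\subset F$ together with \eqref{eq:hgf} for $F/F_0$ and for $F^C/F_0$ (whose base is rational and which is totally ramified at $P_1$) gives, after simplification,
$$g=p\cdot g(F^C)+\frac{(b-1)(p-1)}{2}.$$
Now $G/C$ acts on $F^C$ with the same hypotheses, and $[F^C:F_0]=N/p<N$. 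If $g(F^C)\ge 2$, the induction hypothesis gives $N/p\le\tfrac{4p}{(p-1)^2}g(F^C)^2$; combined with $g\ge p\,g(F^C)$ this yields $N\le\tfrac{4}{(p-1)^2}g^2\le\tfrac{4p}{(p-1)^2}g^2$, completing the step.

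The base cases $g(F^C)\in\{0,1\}$ finish the argument. When $N=p$ (so $F^C=F_0$ and $G\cong\Z/p$), $g=\tfrac{(b-1)(p-1)}2\ge 2$ forces $b\ge 2$, hence $g\ge\tfrac{p-1}2$ and $N=p\le\tfrac{4p}{(p-1)^2}g^2$ --- the extremal configuration, sharp for $p\ge 5$. When $g(F^C)=1$, running \eqref{eq:hgf} for $F^C/F_0$ forces $\sum_{i\ge 2}(|(G/C)_i|-1)=2$, and since every nonzero summand is $\ge p-1$ this confines $N$ and $p$ to a short list checked by hand. The genuinely delicate case is $g(F^C)=0$ with $N/p>1$: here \eqref{eq:hgf} forces $G/C$ to be elementary abelian with a single ramification break at $1$, so the filtration of $G$ collapses to $G,G,C,\dots,C,\{1\}$ with $b-1$ copies of $C$, and $g=\tfrac{(b-1)(p-1)}{2}$, $\tfrac{4p}{(p-1)^2}g^2=p(b-1)^2$; one then needs a lower bound on the length $b$ of the constant tail. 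I expect this to be the main obstacle, since a short filtration such as $G,G,C,\{1\}$ would contradict the theorem: it has to be excluded by finer information on $p$-groups acting with one totally ramified point --- the commutator estimates for the ramification filtration (roughly $[G_i,G_j]\subseteq G_{i+j}$ for $i,j\ge 1$), Hasse--Arf applied to the abelian (sub)quotients, and the structural restrictions they impose --- which force $b-1\ge N/p$, whence $N=p\cdot(N/p)\le p(b-1)^2=\tfrac{4p}{(p-1)^2}g^2$.
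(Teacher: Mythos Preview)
Your structural reductions are correct: Lemma~\ref{lem:two}(i) and Lemma~\ref{1wild}(\ref{order}) force $G$ to be a $p$-group with $P_1$ totally ramified, and the identity $2g=\sum_{i\ge 2}(|G_i|-1)$ together with the case $G_2=G$ is handled cleanly. The inductive step for $g(F^C)\ge 2$ and the base case $N=p$ are also fine.

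However, the paper does not argue this way: it simply observes $G_1=G$ and invokes Stichtenoth's inequality $|G_1|\le \dfrac{4|G_2|}{(|G_2|-1)^2}\,g^2$ from \cite[Satz~1(c)]{Sti1}, which immediately gives the bound. Your induction is an attempt to \emph{reprove} that inequality, and the place where it breaks down is exactly the heart of Stichtenoth's theorem.

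Concretely, in your base case $g(F^C)=0$ with $N/p>1$ you correctly deduce $G_2=\dots=G_b=C$ of order $p$ and $g=\tfrac{(b-1)(p-1)}{2}$, so the assertion $N\le\tfrac{4p}{(p-1)^2}g^2$ becomes $(b-1)^2\ge N/p$. You then \emph{state} the stronger claim $b-1\ge N/p$ and appeal vaguely to ``commutator estimates'' and ``Hasse--Arf on abelian subquotients'', while explicitly conceding this is ``the main obstacle''. This is a genuine gap: the commutator relation $[G_i,G_j]\subseteq G_{i+j}$ only gives $G'\subseteq C$, and Hasse--Arf applied to the abelian quotient $G/C$ or to $C$ yields no information on $b$. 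If $G$ is abelian, Hasse--Arf for $F/F_0$ does force $p^{a-1}\mid(b-1)$ and hence $b-1\ge N/p$; but for non-abelian $G$ (which can certainly occur: $G$ has class $\le 2$ with $G'=C$) this argument is unavailable, and in fact the strong inequality $b-1\ge N/p$ you assert need not follow from those tools. What one actually needs is only $(b-1)^2\ge N/p$, and one route is to take a maximal abelian subgroup $H$ with $C\subseteq H\subseteq G$, apply Hasse--Arf to the abelian extension $F/F^H$ (whose filtration is $H,H,C,\dots,C,\{1\}$) to get $[H:C]\mid(b-1)$, and then use the alternating commutator form on $G/Z(G)$ to see that $[H:C]^2\ge[G:C]$. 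None of this is in your write-up. The case $g(F^C)=1$ is likewise only sketched (``a short list checked by hand''); note that there $\sum_{i\ge2}(|G_i|-1)=2p+(b-1)(p-1)$, so $G_2$ need not equal $C$ and the filtration is more complicated than you suggest.

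In short: the approach is different from the paper's and is promising, but as written it is incomplete precisely at the point that carries the full weight of \cite[Satz~1(c)]{Sti1}.
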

\begin{proof}
In Lemma~\ref{1wild}-$(ii)$, we observe that  
$ G $ is a $ p $-group and the unique ramified place $ P $ of $F_0$ is totally ramified in $F$. Therefore, the first ramification group $ G_1 $ of $ P $ is the whole group $G $. By \cite[Satz~1~(c)]{Sti1}, 
$$
|G_1| \leq \frac{4|G_2|}{(|G_2|-1)^2}g^2 \leq \frac{4p}{(p-1)^2}g^2, 
 $$ where $ G_2 $ is the second ramification group of $ P $. This gives the desired result. 
\end{proof}

\section{Examples}
In this section, we present examples of function fields that attain the bounds we obtained in Theorem~\ref{thm:r=4}, \ref{thm:r=3}, \ref{thm:r=2}, and \ref{thm:r=1}. In other words, the bounds given in these theorems cannot be improved. Moreover, for Theorem~\ref{thm:r=4} and Theorem~\ref{thm:r=3}, we construct a sequence of function fields $ F_n/K $ such that
\begin{itemize}
	\item [(i)] $ g(F_n) \rightarrow \infty $,
	\item [(ii)] there exists a nilpotent subgroup $ G_n \leq \mathrm{Aut}(F_n/K)$, whose order attains the respective bound. 
\end{itemize}

We need the following lemma to construct examples attaining the bound in Theorem~\ref{thm:r=3}.
\begin{lemma}\label{unramifiedext}
	Let $ \mathrm{char}(K) \neq 2$ and $ F_1/K $ be a Galois extension of $ F_0/K $ with $ g(F_1)\geq 2 $. 
	Then there exists a sequence of function fields $ F_n/K $ with the following properties.
	\begin{itemize}
		\item[(i)] $ F_n/F_0 $ is Galois,
		\item[(ii)] $ F_{n+1}/F_n $ is Galois, abelian of degree  $[F_{n+1}:F_n] = 2^{2g(F_n)} $,
		\item[(iii)] $ F_{n+1}/F_n $ is unramified, 
		\item[(iv)] the exponent of $ \mathrm{Gal}(F_{n+1}/F_n) $ is $ 2 $.
	\end{itemize}
\end{lemma}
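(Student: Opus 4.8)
The plan is to take $F_{n+1}$ to be the \emph{maximal unramified abelian extension of $F_n$ of exponent dividing $2$} (inside a fixed algebraic closure $\overline{F_0}$ of $F_0$), and to verify the four properties by induction on $n$. The base case is exactly the hypothesis that $F_1/F_0$ is Galois; since every step of the construction will be unramified of degree $>1$, the Hurwitz Genus Formula \eqref{eq:hgf1} forces $g(F_n)\geq g(F_1)\geq 2$ for all $n$, so the construction can be iterated.

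The first step is to describe $F_{n+1}$ via Kummer theory. Since $\mathrm{char}(K)\neq 2$ and $K$ (hence $F_n$) contains the square roots of unity, the abelian extensions of $F_n$ of exponent dividing $2$ are precisely the fields $F_n(\sqrt u : u\in\Lambda)$ for subgroups $\Lambda\leq F_n^*/(F_n^*)^2$, and $F_n(\sqrt u)/F_n$ is unramified at a place $P$ exactly when $v_P(u)$ is even. Hence the maximal unramified such extension corresponds to $U_n:=\{\,u(F_n^*)^2 : \Div(u)\in 2\,\Div(F_n)\,\}\leq F_n^*/(F_n^*)^2$. The assignment $u\mapsto[\tfrac12\Div(u)]$ defines a homomorphism $U_n\to\Pic^0(F_n)[2]$; it is surjective because every class $c$ with $2c=0$ has a representative $D$ with $2D=\Div(u)$ for some $u$, and it is injective because $\tfrac12\Div(u)=\Div(w)$ forces $u/w^2\in K^*=(K^*)^2$ (here $K=\overline K$ is used), i.e. $u\in(F_n^*)^2$. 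Thus $U_n\cong\Pic^0(F_n)[2]\cong(\mathbb{Z}/2\mathbb{Z})^{2g(F_n)}$, the last isomorphism being the classical computation of the $2$-rank of the Jacobian of a curve of genus $g$ over an algebraically closed field of characteristic $\neq 2$ (see, e.g., \cite{HKT}). In particular $U_n$ is finite, so $F_{n+1}:=F_n\big(\sqrt u : u\in U_n\big)$ is a finite extension.

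Now the properties follow quickly. By Kummer theory $F_{n+1}/F_n$ is Galois and abelian with $\mathrm{Gal}(F_{n+1}/F_n)\cong U_n$; since $U_n$ is an $\mathbb{F}_2$-vector space of dimension $2g(F_n)\geq 1$, this group has order $2^{2g(F_n)}$ and exponent $2$, giving (ii) and (iv). Property (iii) holds because each generating quadratic subextension $F_n(\sqrt u)/F_n$ with $u\in U_n$ is unramified and a compositum of unramified extensions is unramified. For (i), note that $F_{n+1}$ is \emph{uniquely} determined as a subfield of $\overline{F_0}$, being the compositum of all unramified abelian exponent-$\leq\!2$ extensions of $F_n$; by the inductive hypothesis $F_n/F_0$ is Galois, so any $\tau\in\mathrm{Gal}(\overline{F_0}/F_0)$ restricts to a $K$-automorphism of $F_n$, and since ``unramified'', ``abelian'' and ``exponent $\mid 2$'' are intrinsic notions, $\tau(F_{n+1})$ is again this same maximal extension of $F_n$, whence $\tau(F_{n+1})=F_{n+1}$. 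As $F_{n+1}/F_0$ is also finite and separable (a tower of quadratic extensions in characteristic $\neq 2$), it is Galois, which is (i). Finally, Hurwitz applied to the unramified extension $F_{n+1}/F_n$ gives $g(F_{n+1})-1=2^{2g(F_n)}\big(g(F_n)-1\big)\geq g(F_n)-1\geq 1$, so $g(F_n)\geq 2$ persists and in fact $g(F_n)\to\infty$.

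The only ingredient beyond Kummer theory, elementary ramification bookkeeping, and Hurwitz is the structural fact $\dim_{\mathbb{F}_2}\Pic^0(F_n)[2]=2g(F_n)$, so I expect the main point of care to be simply making sure both hypotheses are used exactly where they are needed: $\mathrm{char}(K)\neq 2$ for this dimension count and for separability, and $K=\overline K$ both there and in the injectivity step $K^*=(K^*)^2$ above.
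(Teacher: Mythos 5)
Your proof is correct, and its logical skeleton is the same as the paper's: build $F_{n+1}$ as the (unique) maximal unramified abelian extension of $F_n$ of exponent $2$, and then get property (i) by induction, using the intrinsic characterization of $F_{n+1}$ to show that any automorphism over $F_0$ fixing $F_n$ setwise must fix $F_{n+1}$ setwise as well — that uniqueness argument is exactly the paper's. The difference is in how the key existence statement is handled: the paper simply cites Pries--Stevenson \cite[Section~4.7]{PS} for the existence, uniqueness, degree $2^{2g}$ and exponent $2$ of this maximal extension, whereas you reconstruct it by Kummer theory, identifying the relevant Kummer group $U_n=\{\,u(F_n^*)^2:\Div(u)\in 2\Div(F_n)\,\}$ with $\Pic^0(F_n)[2]$ (your surjectivity/injectivity check is fine, and the injectivity is precisely where $K=\overline K$ enters via $K^*=(K^*)^2$), and then invoking the classical fact $\Pic^0(F_n)[2]\cong(\mathbb{Z}/2\mathbb{Z})^{2g(F_n)}$ for $\mathrm{char}(K)\neq 2$. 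So you have not eliminated the external input, only localized it to the $2$-rank of the Jacobian, but in exchange the unramifiedness, degree, exponent and uniqueness of $F_{n+1}$ become transparent. Two small points of hygiene: since $\mathrm{char}(K)=p>0$, the extension $\overline{F_0}/F_0$ is not Galois, so in the step for (i) you should either work inside a separable closure of $F_0$ (all fields in sight are separable over $F_0$) or argue with $F_0$-embeddings of $F_{n+1}$ extended to automorphisms of $\overline{F_0}$, exactly as the paper does with the Galois closure $\tilde F$ of $F_{n+1}/F_0$; and when you say a compositum of unramified extensions is unramified, a reference to Abhyankar's Lemma \cite[Theorem~3.9.1]{thebook} (applicable since everything is tame here) closes that step cleanly.
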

\begin{proof}
By \cite[Section~4.7]{PS}, for a given function field $ F/K $, there exists a unique maximal field $ F' \supseteq F $ such that 
\begin{itemize}
	\item[(a)] $ F'/F $ is Galois and abelian of degree  $[F':F] = 2^{2g(F)} $,
	\item[(b)] $ F'/F $ is unramified, and 
	\item[(c)] the exponent of $ \mathrm{Gal}(F'/F) $ is $ 2 $.
\end{itemize}
For $ n \geq 1 $, let $ F_{n+1} $ be the extension of $ F_n $ described as above. Now we show that $ F_n/F_0 $ is a Galois extension for each $ n\geq 1 $. We proceed by induction on $ n $. By our assumption, $ F_1/F_0 $ is Galois. Now suppose that $ F_n/F_0 $ is Galois.  
Let $ \tilde{F} $ be the Galois closure of $ F_{n+1}/F_0 $, see Figure~\ref{4}.
\begin{figure}[!ht]
	\begin{center}{
			\xymatrix{
				&&&&&&&&\tilde{F}&&\\
				&&&&&&&&F_{n+1} \ar@{-}[u] &&\\
				&&&&&&&&F_n\ar@{-}[u] \ar@{-}@/_1pc/[u]_{\text{Galois } }&&\\
				&&&&&&&&F_0 \ar@{-}[u]\ar@{-}@/_1pc/[u]_{\text{Galois}  }\ar@{-}@/^2pc/[uuu]^{\text{Galois}}&& 
		}}
	\end{center}
	\caption{The Galois closure of $F_{n+1}/F_0$} 
	\label{4}
\end{figure}
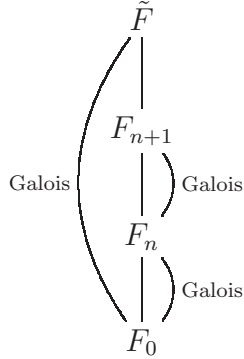	
Let $ \gamma \in \mathrm{Gal}(\tilde{F}/F_0)$. Since $ F_n/F_0 $ is Galois, we have $ \gamma(F_n) = F_n $. In particular, $ \gamma(F_{n+1}) $ is a Galois, abelian, unramified extension of $ F_n $ of degree power of $ 2$. By the uniqueness of such an extension, we conclude that $ \gamma(F_{n+1})=F_{n+1} $, which gives the desired result.
\end{proof}

The following example shows that the bound in Theorem~\ref{thm:r=3} is attained by function fields of infinitely many genera. We apply a similar approach as in \cite{KM}.
%
\begin{example}\label{ex:r=3}
	Let $ p \neq 2 $ and $ \zeta $ be a primitive $8$-th root of unity.
	Consider the function field $ F $ with the defining equation 
	$$  
	y^2 = x(x^4-1). 
	$$
Note that $F/K(x)$ is a Kummer extension of degree $2$, where $(x=\infty)$, $(x=0)$ and $(x=\zeta^{2k})$, $k=0,1,2,3$, are the ramified places of $K(x)$, see \cite[Proposition 3.7.3]{thebook}. Hence we conclude that $ g(F)=2 $. Note that the maps 
	\begin{align*}
	\sigma :\left\{
	\begin{array}{lr} 
	x \mapsto \zeta^2 x \\ 
	y \mapsto \zeta y \\ 
	\end{array}
	\right.
	\qquad
	\text{ and }
	\qquad
	\tau:\left\{
	\begin{array}{lr}
	x \mapsto -1/x \\
	y \mapsto y/x^3
	\end{array}
	\right.
	\vspace*{-.5cm}
	\end{align*}
	define automorphisms of $ F$ over $K$. Set $\eta :=\sigma \tau$. Then $\sigma, \eta \in \mathrm{Aut}(F/K)$ such that $ \mathrm{ord}(\sigma) = 8$, $ \mathrm{ord} (\eta) = 2 $ and $\eta\sigma \eta^{-1}=\sigma^3$. Let $G=\langle \sigma \rangle \rtimes  \langle\eta \rangle$. Then $G$ is a group of order $16$. Set $ z:= x^4 $ and $ t:= \displaystyle \frac{z^2+1}{2z} $. We consider the sequence of function fields $K(t) \subseteq K(z) \subseteq K(x) \subseteq F$ to investigate the ramification structure in $F/K(t)$, see Figure~\ref{5}.

\begin{figure}[!ht]
		\begin{center}{
				\xymatrix{
					&&&&&&&{F}=K(x,y)&&\\
					&&&&&&& K(x) \ar@{-}[u]^{y^2 = x(x^4-1) \;\;}_{\;\;\mathrm{deg}=2}&&\\
					&&&&&&&K(z)\ar@{-}[u]^{z=x^4 \;\;}_{\;\; \mathrm{deg}=4}&&\\
					&&&&&&&K(t) \ar@{-}[u]^{t= \frac{z^2+1}{2z} \;\;}_{\;\;\mathrm{deg}=2}&& }}
		\end{center}
		\caption{$K(t) \subseteq K(z) \subseteq K(x) \subseteq F$ } \label{5}
	\end{figure}

\noindent Observe that $ K(t) \subseteq F^{\langle \sigma \rangle} $ and $ K(t) \subseteq F^{\langle \tau \rangle} $; hence, $ K(t) \subseteq F^G $. Then the fact that $[F:K(t)]~=~16$ implies that $F^G=K(t)$, that is, $ F/K(t) $ is a Galois extension of degree $ 16 $.  
Then we have the following observations.
\begin{itemize}
\item[(i)] $(z=0)$ and $(z=\infty)$ are the only ramified places of $K(z)$ in $K(x)$, which are totally ramified. 
$(x=0)$ and $(x=\infty)$ are the unique places lying over $(z=0)$ and $(z=\infty)$, respectively. That is, $(z=0)$ and $(z=\infty)$ are totally ramified in $F$. Furthermore, $(x=\zeta^{2k})$, $k=0,1,2,3$ are the places lying over $(z=1)$.
\item[(ii)] Ramified places of $K(t)$ in $K(z)$ are $(t=1)$ and $(t=-1)$ lying over $(z=1)$ and $(z=-1)$, respectively. Furthermore, $(z=0)$ and $(z=\infty)$ lie over $(t=\infty)$. 
\end{itemize}	
Hence, we conclude that the ramified places of $ K(t) $ in $ F $ are $(t=-1) $, $(t=1) $ and $(t=\infty) $ with ramification indices $ 2,4,8$, respectively. That is, $ F $ is of type $ (2,4,8)$ and $N=16=16(g(F)-1)$.

Set $ F_0 = K(t) $ and $ F_1 := F $. Then by Lemma~\ref{unramifiedext}, there exists a sequence of function fields $ F_n/K $ such that $F_{n+1}/K(t) $ is a Galois extension of degree power of $ 2 $. Note that $ g(F_{n+1}) > g(F_{n})$ as $ g(F_1)=2 $. Since $ F_{n+1}/F_1 $ is an unramified extension, $ K(t) $ has  exactly $3$ ramified places in $ F_{n+1} $, namely, $ (t=-1) $, $ (t=1) $ and $ (t=\infty) $, whose ramification indices are $2$, $4$, $8$, respectively. Thus, $ F_{n+1} $ is of type $ (2,4,8) $ such that $ [F_{n+1}:K(t)] = 16 (g(F_{n+1})-1)  $.
	
\end{example}

By using Example~\ref{ex:r=3}, we obtain the following example, which shows that  the bound given in Theorem~\ref{thm:r=4} is attained by function fields of infinitely many genera.
\begin{example} \label{ex:r=4}
Let $F_n/K(t)$ be the Galois extension given in Example \ref{ex:r=3} for $n\geq 1$. Recall that $ p \neq 2 $. We consider the Kummer extension $K(w)/K(t)$ given by $w^2=t-1$. Note that $w\in F_n$ as $t$ is a square in $F_n$, i.e., $F_n/K(w)$ is a Galois extension of degree power of $ 2$, see Figure~\ref{6}.
\begin{figure}[!ht]
		\begin{center}{
				\xymatrix{
					&&&&&&&{F_n}&&\\
					&&&&&&& &&\\
					&&&&&&& & K(w)\ar@{-}[uul]_{\;\;\mathrm{deg}= \frac{[F_n:K(t)]}{2}}&\\
					&&&&&&&K(t)\ar@{-}[uuu]^{\;\;}\ar@{-}[ur]_{w^2=t-1}&& }}
		\end{center}
		\caption{$K(t) \subseteq K(w) \subseteq F_n$} \label{6}
	\end{figure}
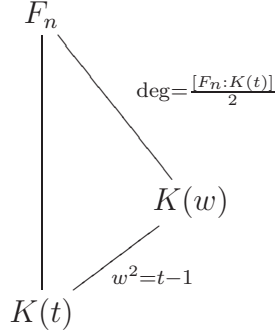	
By \cite[Proposition 3.7.3]{thebook} , $(t=1)$ and $(t=\infty)$ are the only ramified places of $K(t)$ in $K(w)/K(t)$. In particular, $(w=0)$ and $(w=\infty)$ are the places of $K(w)$ lying over $(t=1)$ and $(t=-1)$, respectively. Moreover, $(w=\alpha)$ and $(w=-\alpha)$ are the places of $K(w)$ lying over $(t=-1)$, where $\alpha^2=-2$. Then the transitivity of the ramification extension, we conclude that $(w=\alpha)$, $(w=- \alpha)$, $(w=0)$ and $(w=\infty)$ are the only ramified places of $K(w)$ in $F_n$ and the ramification indices are given by 
\begin{align*}
e((w= \alpha))=e((w=- \alpha))=e((w=0))=2 \quad \text{and} \quad e((w=\infty))=4 . 
\end{align*}
Hence, $F_n$ is a function field of type $(2,2,2,4)$ satisfying $[F_n:K(w)]=8(g(F_n)-1)$.
\end{example}

The following two examples show that both cases, where the bound in Theorem~\ref{thm:r=2} can be attained, appear, see Remark \ref{rem:2}.
\begin{example} \label{ex:r=2}
Let $ p=2 $ and let $ F $ be a function field given by the defining equation $y^2-y =x^5$. By considering $ F $ as a Kummer extension over $ K(y) $, where $ (y=\infty) $, $ (y = 0) $ and $ (y=1) $ are all the ramified places of $K(y)$, we conclude that $ g(F)=2 $ by \cite[Proposition 3.7.3]{thebook}.
 
\noindent Set $ z:= x^5 $. Then $K(x)/K(z)$ and $K(y)/K(z)$ are Galois extensions of degree $5$ and $2$, respectively. Hence, $F/K(z)$ is a Galois extension of degree $10$, see Figure~\ref{3}. Note that the automorphism group of $ F/K(z) $ is generated by $\sigma$ defined by
	$$
	\sigma: \left\{
	\begin{array}{lr} 
	x \mapsto \zeta x \\ 
	y \mapsto y + 1 ,\\ 
	\end{array}
	\right.
	$$
	where $ \zeta $ is a primitive $5$-root of unity. 
	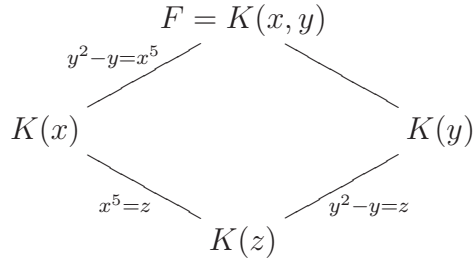
\begin{figure}[!ht]
		\begin{center}{
				\xymatrix{
					&&&&&& F =K(x,y)&&&&&\\
					&&&&&K(x) \ar@{-}[ur]^{y^2-y=x^5}&&K(y) \ar@{-}[ul]&&&& \\
					&&&&&& K(z) \ar@{-}[ur]_{y^2-y=z}\ar@{-}[ul]^{x^5=z}&&&&&}}
		\end{center}
		\caption{$F$ as a compositum of $K(x)$ and $K(y)$} \label{3}
	\end{figure}

\noindent Note that $(z=\infty)$ is the only ramified place in $K(y)$ with ramification index $2$. Also, $(z=0)$ and $(z=\infty)$ are the only ramified places in $K(x)$ with ramification indices $5$. Then, by Abhyankar's Lemma \cite[Theorem 3.9.1]{thebook}, $(z=0)$ and $(z=\infty)$ are the only ramified places of $K(z)$ in $F$ with the ramification indices $5$ and $10$, respectively. That is, $ F $ is of type $ (5,10) $ satisfying $ [F:K(z)] = 10 (g(F)-1)$.

\vspace{0.2cm}
	
\noindent Let $ p=5 $ and let $ F $ be a function field given by the defining equation $y^5-y =x^2$. Similarly, $F$ is a function field of genus $2$. If we set $ z:= x^2 $, then $F/K(z)$ is a cyclic extension of degree $10$, where $ (z=0) $ and $ (z=\infty) $ are all the ramified places of $K(z)$ in $F$ with ramification indices $2$ and $10$, respectively. That is, $ F $ is of type $ (2,10) $ satisfying $ [F:K(z)] = 10(g(F)-1)$.  Note that the automorphism group of $ F/K(z) $  is generated by $\sigma$ defined by
	$$
	\sigma: \left\{
	\begin{array}{lr} 
	x \mapsto \zeta x \\ 
	y \mapsto y + 1 ,\\ 
	\end{array}
	\right.
	$$
where $ \zeta $ is a primitive $2$-root of unity.

\end{example}

The following example shows that the bound in Theorem \ref{thm:r=1} holds, for further details see \cite[Satz~5]{Sti2}.

\begin{example}\label{ex:r=1}
	 Let $p\geq 5 $ and $n \geq 1 $ be integers. 
	Consider the function field $ F_n:= K(x,y) $ defined by 
	$$
	y^{p}+ y =x^{p^{n}+1}.
	$$
Then $g(F_n)= \frac{p^{n}(p-1)}{2} $.
	Note that the pole divisors of $ x,y $ are $ (x)_\infty = p\cdot P $ and $ (y)_\infty = (p^n+1)\cdot P $, respectively, for a place $ P $ of $ F_n$. 
	Let $ G=(\mathrm{Aut}(F_n/K))_{P} $ be the  automorphism group fixing the unique pole $ P $ of $ x $ and $ y $. The group $ G $ consists of automorphisms of the form 
	$$ \sigma:\left\{
	\begin{array}{lr}
	x \mapsto x+d, \\
	y \mapsto y+Q(x),
	\end{array}
	\right.
	$$ 
	where $ p\deg Q(x) \leq p^n $ and $ Q(x)^p + Q(x) = (x+d)^{p^n+1} -x^{p^n+1} $. In this case, $|G| = p^{2n+1}$ and $\displaystyle |G| = \frac{4p}{(p-1)}g^2 $.
\end{example}

	\section*{Acknowledgements}

	The authors would like to thank Prof.~Dr.~Henning Stichtenoth and Dr.~Maria Montanucci for the helpful discussions and comments, which improved the manuscript considerably.
	N. A. is supported by B.A.CF-19-01967 and B.G. is supported by T\"{U}B\.{I}TAK--2214A Fellowship Program.

\end{document}